\newcommand{\R}{\mathbb{R}}
\DeclareMathOperator*{\esssup}{ess\,sup}
\DeclareMathOperator*{\essinf}{ess\,inf}
\DeclareMathOperator*{\Lip}{Lip}
\def\vint_#1{\mathchoice%
  {\mathop{\kern 0.2em\vrule width 0.6em height 0.69678ex depth
      -0.58065ex \kern -0.8em \intop}\nolimits_{\kern -0.4em#1}}%
  {\mathop{\kern 0.1em\vrule width 0.5em height 0.69678ex depth
      -0.60387ex \kern -0.6em \intop}\nolimits_{#1}}%
  {\mathop{\kern 0.1em\vrule width 0.5em height 0.69678ex depth
      -0.60387ex \kern -0.6em \intop}\nolimits_{#1}}%
  {\mathop{\kern 0.1em\vrule width 0.5em height 0.69678ex depth
      -0.60387ex \kern -0.6em \intop}\nolimits_{#1}}}
\newcommand{\art}[6]{{\sc #1, \rm #2, \it #3 \bf #4 \rm (#5), \mbox{#6}.}}
\newcommand{\book}[3]{{\sc #1, \it #2, \rm #3.}}
\newcommand{\AND}{{\rm and }}
\newcommand{\p}{{$p\mspace{1mu}$}}
\newcommand{\grad}{\nabla}
\newcommand{\Om}{\Omega}
\newcommand{\loc}{_{\rm loc}}
\newcommand{\eps}{\varepsilon}
\DeclareMathOperator*{\supp}{supp}
\theoremstyle{plain}
\newtheorem{theorem}[equation]{Theorem}
\newtheorem{lemma}[equation]{Lemma}
\numberwithin{equation}{section}
\theoremstyle{definition}
\newtheorem{deff}[equation]{Definition}
\theoremstyle{remark}
\newtheorem{remark}[equation]{Remark}
\title{On the Harnack inequality for parabolic minimizers in
  metric measure spaces}
\author{Niko Marola and Mathias Masson} \address[N.M.]{University of
  Helsinki, Department of Mathematics and Statistics, P.O. Box 68,
  FI-00014 University of Helsinki, Finland}
\email{niko.marola@helsinki.fi}
\address[M.M.]{Aalto University, Department of Mathematics, P.O. Box
  11100, FI-00076 Aalto University, Finland}
  \email{mathias.masson@aalto.fi}
\begin{document}

\keywords{Doubling measure, Harnack inequality, metric space,
  minimizer, Newtonian space, parabolic, Poincar\'e inequality.}

\subjclass[2010]{Primary: 30L99; Secondary: 35K55.}

\begin{abstract} In this note we consider problems related to
  parabolic partial differential equations in geodesic metric measure spaces,
  that are equipped with a doubling measure and a Poincar\'e
  inequality. We prove a location and scale invariant Harnack
  inequality for a minimizer of a variational problem related to
 a doubly non-linear parabolic equation involving the
  \p-Laplacian. Moreover, we prove the sufficiency of the
  Grigor'yan--Saloff-Coste theorem for general $p>1$ in geodesic metric
  spaces. The approach used is strictly variational, and hence
  we are able to carry out the argument in the metric setting.
\end{abstract}

\maketitle

\section{Introduction}
\label{sect:Intro}

The purpose of this note is to study parabolic minimizers, which in
the Euclidean case are related  to the doubly non-linear parabolic equation
\begin{equation} \label{equation} \frac{\partial (|u|^{p-2}u)}{\partial
    t}-\grad\cdot(|\grad u|^{p-2}\grad u)=0,
\end{equation}
where $1<p<\infty$. When $p=2$ we can
recover the heat equation from \eqref{equation}. A function $u :
\Omega \times (0,T) \rightarrow \R$, where $\Omega\subset\R^n$ is a non-empty
open set, is called a parabolic quasiminimizer related to the equation
\eqref{equation} if it satisfies
\begin{align*}
p\int_{\supp(\phi)} |u|^{p-2}u \frac{\partial \phi}{\partial t} \,dx \,dt +
\int_{\supp(\phi)}&|\nabla u|^p\, dx\,dt\\
 &\leq K\int_{\supp(\phi)}|\nabla (u+\phi)|^p\, dx\,dt
\end{align*}
for some $K\geq 1$ and every smooth compactly supported function
$\phi$ in $\Omega \times (0,T)$. More precisely, in the Euclidean
setting every weak solution to \eqref{equation} is a parabolic
minimizer, i.e., a parabolic quasiminimizer with $K=1$.

Elliptic quasiminimizers were introduced by Giaquinta and Giusti in
\cite{GiaGiu82OnT, GiaGiu84Qua}. They enable the study of elliptic
problems, such as the \p-Laplace equation and \p-harmonic functions,
in metric measure spaces under the doubling property and a Poincar\'e
inequality. We refer, e.g., to \cite{BjoA}, \cite{BjoAMaro06},
\cite{BjoJ}, \cite{KinMar03Pot}, \cite{KiSha}, and the references in
these papers.  Following Giaquinta--Giusti, Wieser~\cite{wieser}
generalized the notion of quasiminimizers to the parabolic setting in
Euclidean spaces. Parabolic quasiminimizers have also been studied by
Zhou~\cite{Zho93OnT, Zho94Par}, Gianazza--Vespri~\cite{GiaVes06Par},
Marchi~\cite{Marchi}, and Wang~\cite{Wang}. The literature for
parabolic quasiminimizers is very small compared to the elliptic
case. In recent papers \cite{KiMaMiPa}, \cite{MassSilj}, parabolic
quasiminimizers related to the heat equation have been studied in
general metric measure spaces. The variational approach taken in these
papers opens up a possibility to develop a systematic theory for
parabolic problems in this generality.

Our main result is a scale and location invariant Harnack inequality,
Theorem~\ref{thm:Harnack}, in geodesic metric measure spaces for a
positive parabolic minimizer that is locally bounded away from zero
and locally bounded. We assume the measure to be doubling and to
support a $(1,p)$-Poincar\'e inequality. We take a purely variational
approach and prove the Harnack inequality without making any reference
to the equation \eqref{equation}.

In Euclidean spaces, the Harnack inequality for a positive weak
solution to the equation~\eqref{equation}, that is bounded away from
zero, was proved in \cite{KiKu}. Their proof is based on Moser's
method and on an abstract lemma due to Bombieri and Giusti. The
argument in \cite{KiKu} relies on the equation and uses, for instance,
the fact that if $u$ is a weak supersolution to \eqref{equation}, then
$u^{-1}$ is a weak subsolution of the same equation.

Our proof is based on the one in \cite{KiKu}. However, since we deal
with parabolic minimizers and upper gradients in the metric setting,
changes in the argument are required. To give an example, in the
strictly variational setting it is not true that if $u$ is a parabolic
superminimizer, then $u^{-1}$ is a parabolic subminimizer. Instead we
establish the required estimates separately for both super- and
subminimizers.

Grigor'yan~\cite{Grigor} and Saloff-Coste~\cite{SaloffIMRN} observed
independently that the doubling property and a Poincar\'e inequality
for the measure are sufficient and necessary conditions for a scale
and location invariant parabolic Harnack inequality for solutions to
the heat equation ($p=2$) on Riemannian manifolds. Later,
Sturm~\cite{Sturm} generalized this result to the setting of Dirichlet
spaces.

One motivation for the present note is to show the sufficiency for
general $1<p<\infty$ in geodesic metric measure spaces without
invoking Dirichlet spaces or the Cheeger derivative structure for
which we refer to \cite{Cheeger}. We also refer to a recent paper
\cite{BarGriKum} and to \cite{BarBasKum} on parabolic Harnack
inequalities on metric measure spaces with a local regular Dirichlet
form. It would be very interesting to know whether also the necessity
holds in this general setting.

Very recently a similar question has been studied for degenerate
parabolic quasilinear partial differential equations in the
subelliptic case by Caponga, Citti, and Rea~\cite{CaCiRe}.  Their
motivating example is a class of subelliptic operators associated to a
family of H\"ormander vector fields and their Carnot-Carath\'eodory
distance. The setup in the present paper cover also Carnot groups and
more general Carnot--Carath\'eodory spaces.

\section{Prelimininaries}
\label{sect:setup}

 In this section we briefly recall the basic definitions and collect
some results we will need in the sequel. For a more detailed treatment
we refer, for instance, to a monograph by A. and
J. Bj\"orn~\cite{BjoBjo} and to Heinonen~\cite{Heinonen}, and the
references therein.

\subsection{Metric measure spaces}
Standing assumptions in this paper are as follows. By the triplet
$(X,d,\mu)$ we denote a complete geodesic metric space $X$, where $d$ is
the metric and $\mu$ a Borel measure on $X$.  The measure $\mu$ is
supposed to be \emph{doubling}, i.e., there exists a constant $C_\mu
\geq 1$ such that
\begin{equation} \label{doubling}
0<\mu(B(x,2r)) \leq C_\mu \mu(B(x,r))<\infty
\end{equation}
for every $r > 0$ and $x \in X$. Here $B(x,r):=\{y\in X :\;
d(y,x)<r\}$. We denote $\lambda B = B(x,\lambda r)$ for each $\lambda
>0$. We want to mention in passing that to require the measure of
every ball in $X$ to be positive and finite is anything but
restrictive; it does not rule out any interesting
measures. Equivalently, for any $x\in X$, we have
\begin{equation} \label{estimatedoub}
\frac{\mu(B(x,R))}{\mu(B(x,r))} \leq C\left(\frac{R}{r}\right)^{q_\mu}
\end{equation}
for all $0 < r\leq R$ with $q_\mu := \log_2 C_\mu$, where $C>0$ is a
constant which depends only on $C_\mu$. The choice $q_\mu = \log_2 C_\mu$
is not necessarily optimal; the exponent $q_\mu$ serves as a counterpart in
metric measure space to the dimension of a Euclidean space. In
addition to the doubling property, we assume that $X$ supports a
\emph{weak $(1,p)$-Poincar\'e inequality} (see below). Moreover, the
product measure in the space $X\times (0,T)$, $T>0$, is denoted by
$\nu=\mu \otimes \mathcal{L}^1$, where $\mathcal{L}^1$ is the one
dimensional Lebesgue measure.

It is worth noting that our abstract setting causes some, perhaps
unexpected, difficulties. For instance, in not too pathological metric
spaces, it may happen that $B(x_1,r_1)\subset B(x_2,r_2)$ but
$B(x_2,2r_2)\subset B(x_1,2r_1)$.

We follow Heinonen and Koskela~\cite{HeKoActa} in introducing upper
gradients as follows. A Borel function $g: X\to [0, \infty]$ is said
to be an \emph{upper gradient} for an extended real-valued function
$u$ on $X$ if for all paths $\gamma: [0,l_{\gamma}] \to X$, we have
\begin{equation}\label{defUG}
| u(\gamma(0)) - u(\gamma(l_\gamma))| \leq \int_{\gamma} g \, ds.
\end{equation}

If \eqref{defUG} holds for \p-almost every path in the sense of
Definition~2.1 in Shanmugalingam~\cite{Sha}, we say that $g$ is a
\p-weak upper gradient of $u$. From the definition, it follows
immediately that if $g$ is a \p-weak upper gradient for $u$, then $g$
is a \p-weak upper gradient also for $u-k$, and $|k|g$ for $ku$, for
any $k\in \R$.

The \p-weak upper gradients were introduced in Koskela--MacManus
\cite{KoMac}. They also showed that if $g \in L^p(X)$ is a \p-weak
upper gradient of $u$, then one can find a sequence
$\{g_j\}_{j=1}^\infty$ of upper gradients of $u$ such that $g_j \to g$
in $L^p(X)$. If $u$ has an upper gradient in $L^p(X)$, then it has a
\emph{minimal \p-weak upper gradient} $g_u \in L^p(X)$ in the sense
that for every \p-weak upper gradient $g \in L^p(X)$ of $u$, $g_u \leq
g$ a.e. (see Shan\-mu\-ga\-lin\-gam~\cite[Corollary~3.7]{Sha-harm}).

Let $\Omega$ be an open subset of $X$ and $1\leq p<\infty$.  Following
Shanmugalingam~\cite{Sha} (see also \cite[Corollary 2.9]{BjoBjo}), we
define for $u\in L^p(\Omega)$,
\[
\|u\|_{N^{1,p}(\Omega)}^p=\|u\|_{L^p(\Omega)}^p+ \|g_u\|_{L^{p}(\Omega)}^p.
\]
The \emph{Newtonian space} $N^{1,p}(\Omega)$ ($\subset L^p(\Omega)$)
is the quotient space
\[
N^{1,p}(\Omega)=\left\{
u\in L^{p}(\Omega):\; \|u\|_{N^{1,p}(\Omega)}<\infty
\right\} / \sim,
\]
where $u\sim v$ if and only if $\|u-v\|_{N^{1,p}(\Omega)}=0$.  The
space $N^{1,p}(\Omega)$ is a Banach space and a lattice (see
Shanmugalingam~\cite{Sha}). If $u,v\in N^{1,p}(\Omega)$ and $u=v$
$\mu$-a.e., then $u\sim v$. However, if $u\in N^{1,p}(\Omega)$, then
$u\sim v$ if and only if $u=v$ outside a set of zero Sobolev
\p-capacity \cite{Sha}.

A function $u$ belongs to the \emph{local
  Newtonian space} $N^{1,p}_{\rm{loc}}(\Omega)$ if $u\in N^{1,p}(V)$
for all bounded open sets $V$ with $\overline{V}\subset\Omega$, the
latter space being defined by considering $V$ as a metric space with
the metric $d$ and the measure $\mu$ restricted to it.

Newtonian spaces share many properties of the classical Sobolev
spaces.  For example, if $u,v \in N^{1,p}_{\rm{loc}}(\Omega)$, then
$g_u=g_v$ a.e.\ in $\{x \in \Omega :\; u(x)=v(x)\}$, in particular
$g_{\min\{u,c\}}=g_u \chi_{\{u \neq c\}}$ for $c \in \R$.

\begin{remark}\label{absolute continuity}
  Note that as a consequence of the definition, the functions in
  $N^{1,p}(\Omega)$ are absolutely continuous on $p$-almost every
  path. This means that $u\circ \gamma$ is absolutely continuous on
  $[0,\textrm{length}(\gamma)]$ for $p$-almost every rectifiable
  arc-length parametrized path $\gamma$ in $\Omega$. This in turn
  implies that for each of these paths we have $|(u\circ
  \gamma)'(s)|\leq g(\gamma(s))$ for almost every $s\in
  [0,\textrm{length}(\gamma)]$. We refer to \cite[Theorem 1.56 and
  Lemma 2.14]{BjoBjo}.
\end{remark}

We shall also need a Newtonian space with zero boundary values. For a
measurable set $E\subset X$, let
\[
N_0^{1,p}(E) = \{f|_E:\; f\in N^{1,p}(X) \textrm{ and } f= 0 \textrm{ on
} X\setminus E\}.
\]
This space equipped with the norm inherited from $N^{1,p}(X)$ is a
Banach space.

We say that $X$ supports a \emph{weak $(1,p)$-Poincar\'e inequality}
if there exist constants $C_p>0$ and $\Lambda \geq 1$ such that for
all balls $B(x_0,r) \subset X$, all integrable functions $u$ on $X$
and all upper gradients $g$ of $u$,
\begin{equation} \label{PI} \vint_{B(x_0,r)} |u-u_B|\, d\mu \leq C_p
r\left(\vint_{B(x_0,\Lambda r)}g^p \, d\mu \right)^{1/p},
\end{equation}
where
\[
u_B:= \vint_{B(x_0,r)} u\, d\mu := \frac1{\mu({B(x_0,r)})}\int_{B(x_0,r)} u\, d\mu.
\]
If the metric measure space $X$ has not ``enough'' rectifiable paths,
it may happen that the continuous embedding $N^{1,p} \to L^p$, given
by the identity map, is onto. If $X$ has no nonconstant rectifiable
paths, then $g_u \equiv 0$ is the minimal $p$-weak upper gradient of
every function, and $N^{1,p}(X) = L^p(X)$ isometrically. The fact that
the Newtonian space is not simply $L^p(X)$ is implied, for instance,
by assuming that $X$ supports a weak $(1,p)$-Poincar\'e inequality.

\subsection{Parabolic setting}

Our set-up is the following. Let $\Om\subset X$ be an open set, and
$0<T<\infty$. We write $\Om_T := \Om_{(0,T)} := \Om\times (0,T)$ for a
space-time cylinder, and $z=(x,t)$ is a point in $\Om_T$. We denote by
$L^p(0,T; N^{1,p}(\Om))$ the parabolic space of functions
$u:\Om_T\to\R$ such that, for a.e. $t\in (0,T)$, $x\mapsto u(x,t)$
belongs to $N^{1,p}(\Om)$ and
\[
\int_{0}^{T}\|u\|_{N^{1,p}(\Omega)}^p\,dt< \infty,
\]
and similarly for $L\loc^p(0,T;N\loc^{1,p}(\Om))$. Here we have defined 
\[
g_u(x,t):=g_{u(\cdot,t)}(x)
\]
at $\nu$-almost every $(x,t)\in \Omega\times (0,T)$. 
 
The following calculus rules will be used throughout the text. Assume
$u,v \in L_{\textrm{loc}}^p(0,T;N_{\textrm{loc}}^{1,p}(\Omega))$. Then
for almost every $t$ and $\mu$-almost every $x$
\begin{align*}
&g_{u+v}\leq g_u+g_v,\\
&g_{uv}\leq |u|g_v+|v|g_u.
\end{align*}
In particular if $c$ is a constant, then $g_{c u}=|c| g_u$. For the
proof at each time level, see \cite{BjoBjo}. This proof guarantees
that $g_{u+v}$ and $g_{uv}$ are defined at almost every $t$ and
$\mu$-almost every $x$. The definition of the parabolic minimal
$p$-weak upper gradient then implies the result. Note that the above
does not claim that $uv$ is in the parabolic Newtonian space, even if
$u$ and $v$ are.

In the Euclidean case it can be shown that stating that a function
$u:\Omega \times (0,T) \to \R$, $u \in L^2_{\rm loc}(0,T; W^{1,2}_{\rm
  loc}(\Omega))$ is a weak solution to the doubly nonlinear parabolic
equation \eqref{equation}, is equivalent to stating that $u$ is fulfills
the variational problem
\begin{equation*}
\begin{split}
  p\int_{\textrm{supp}(\phi)} |u|^{p-2}u\frac{\partial \phi}{\partial t}
  \,dx \,dt+\int_{\textrm{supp}(\phi)} &|\grad u|^p\,dx\, dt \\ \leq
  &\int_{\textrm{supp}(\phi)} |\grad u+\grad \phi|^p\, dx\, dt
\end{split}
\end{equation*}
for every $\phi \in C_0^\infty (\Omega \times (0,T))$. Since partial
derivatives cannot be defined in a general metric space, there is
little sense in trying to define the weak formulation of the
equation~\eqref{equation} in the metric setting. The variational
approach on the other hand only considers integrals with absolute
values of partial derivatives and an inequality -- as opposed to
demanding a strict equation with gradients. This opens up the
possibility to extend the definition of a parabolic minimizer related
to the doubly nonlinear equation to metric measure spaces in the
following way:

\begin{deff}
  We say that a function $u\in L\loc^{p}(0,T; N\loc^{1,p}(\Om))$
  is a \emph{parabolic minimizer} if the inequality
\begin{equation} \label{minimizer}
 p \int_{\supp(\phi)}|u|^{p-2}u\frac{\partial \phi}{\partial t}\, d\nu  + \int_{\supp(\phi)}g_u^p\, d\nu  \leq
  \int_{\supp(\phi)}g_{u+\phi}^p\, d\nu
\end{equation}
holds for all $\phi \in \textrm{Lip}_0(\Om_T) = \{f\in\Lip(\Om_T):\;
\supp(f)\subset\Om_T\}$. If \eqref{minimizer} holds for all
nonnegative $\phi \in \textrm{Lip}_0(\Om_T)$ a function $u\in
L\loc^{p}(0,T;N\loc^{1,p}(\Om))$ is a \emph{parabolic superminimizer};
and a \emph{parabolic subminimizer} if \eqref{minimizer} holds for all
nonpositive $\phi \in \textrm{Lip}_0(\Om_T)$.
\end{deff}
Observe that here parabolic minimizers are scale invariant but not
translation invariant. 

Let $0<\alpha \leq 1$, let parameters $r$ and $T$ be positive, and
$t_0\in\R$. A space-time cylinder in $X\times\R$ is denoted by

\begin{align*}
Q_{\alpha r}(x,t)&=B(x,\alpha r)\times (t-T(\alpha r)^p, t+T(\alpha r)^p).
\end{align*}
It will also be of use to define positive and negative space-time
cylinders as
\begin{align*}
  \alpha Q^+(x,t)&=B(x,\alpha r)\times \left(t+T\left(\frac{1-\alpha}{2}\right)^pr^p, t+T\left( \frac{1+\alpha}{2}\right)^pr^p\right),\\
  \alpha Q^-(x,t)&=B(x,\alpha r)\times
  \left(t-T\left(\frac{1+\alpha}{2}\right)^pr^p, t-T\left(
      \frac{1-\alpha}{2}\right)^pr^p\right).
\end{align*}
Using these, we write
\begin{align*}
  &Q_{\alpha r}=Q_{\alpha r}(x_0, t_0),
\quad\alpha Q^+= \alpha Q^+(x_0, t_0), \quad\alpha Q^-= \alpha Q^-(x_0, t_0).
\end{align*}
Above $r$ is chosen according to $(x_0,t_0)$ and $T$ in such a way
that $Q_{r}\subset \Omega_T$. Our goal in this note is to prove the
following Harnack inequality using Moser's argument and energy
methods:

Suppose $1<p<\infty$ and assume that the measure $\mu$ in a geodesic
metric space $X$ is doubling with doubling constant $C_\mu$, and
supports a weak $(1,p)$-Poincar\'e inequality with constants $C_p$ and
$\Lambda$. Then a parabolic Harnack inequality is valid as follows.
Let $u>0$ be a parabolic minimizer in $Q_r\subset \Om_T$, locally
bounded and locally bounded away from zero. Let $0<\delta<1$. We have
\begin{equation}\label{weakHarnack} 
\esssup_{\delta Q^-}u \leq C\essinf_{\delta Q^+}u,
\end{equation}
where $C=C(C_\mu,C_p,\Lambda,\delta,p,T)$.

Note that the constant in the Harnack estimate does not depend on $r$ and so is scale invariant, as long as $r$ is such that $Q_r\subset \Omega_T$. The parameter $T$ controls only the  relative proportions of the spatial and time faces of $Q_r$.

\subsection{Sobolev-Poincar\'e inequalities}

We shall need the Sobolev inequality for functions with zero boundary
values; if $f\in N_0^{1,p}(B(x_0,R))$, then there exists a constant
$C>0$ only depending on $p, C_\mu$, and the constants $C_p$ and
$\Lambda$ in the Poincar\'e inequality, such that

\begin{equation} \label{Sobo} \left(\vint_{B(x_0,R)}|f|^{\kappa}\,
  d\mu \right)^{1/\kappa} \leq C R\left(\vint_{B(x_0,R)}g_f^p\, d\mu
  \right)^{1/p},
\end{equation}
where 
\begin{equation*} \label{eq:Sobexp} \kappa = \left\{ \begin{array}{ll}
      pq_\mu/(q_\mu-p), &  \textrm{if } 1<p<q_\mu \\ & \\
      \infty, & \textrm{if } p\geq q_\mu. \end{array} \right.
\end{equation*}
For this result we refer to \cite{KiSha}. The following weighted
version of the Poincar\'e inequality will also be needed.

\begin{lemma}\label{weighted Poincare} Let $f\in
N^{1,p}(B(x_0,R))$, and
\[
\phi(x) = \left(1-\frac{d(x,x_0)}{R}\right)^{\theta}_+,
\]
where $\theta>0$. Then there exists a positive constant $C=C(C_\mu,
C_p, p,\theta)$ such that
\begin{equation} \label{wPI} \vint_{B(x_0,r)}|f-f_\phi|^p\phi\, d\mu 
  \leq Cr^p \vint_{B(x_0, r)}g_f^p\phi\, d\mu 
\end{equation}
for all $0<r<R$, where
\[
f_\phi = \frac{\int_{B(x_0,r)}f\phi\, d\mu }{\int_{B(x_0,r)}\phi\, d\mu }.
\]
\begin{proof}[Sketch of proof]
  The main idea in the proof, for which we refer to
  Saloff-Coste~\cite[Theorem 5.3.4]{Saloff-Coste}, is to connect two
  points in the ball $B(x_0,r)$ with a certain finite chain of
  balls. For this chain we need to assume that our space $X$ is
  geodesic.
\end{proof}
\end{lemma}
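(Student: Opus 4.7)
The plan is to reduce the weighted inequality to the unweighted $(1,p)$-Poincar\'e inequality via a chaining argument along geodesics, following the strategy of Saloff-Coste~\cite[Theorem~5.3.4]{Saloff-Coste}. First, by Jensen's inequality applied with the probability measure $\phi\, d\mu/\Phi$, where $\Phi := \int_{B(x_0,r)}\phi\, d\mu$, the left-hand side of \eqref{wPI} is bounded by
$$\frac{1}{\Phi^2}\int_{B(x_0,r)}\int_{B(x_0,r)}|f(x)-f(y)|^p\phi(x)\phi(y)\, d\mu(x)\, d\mu(y),$$
so it suffices to estimate this symmetric double integral.

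The main step is a pointwise telescoping bound for $|f(x)-f(y)|$. For each pair $(x,y)\in B(x_0,r)\times B(x_0,r)$, I would use the geodesic $\gamma_{xy}$ connecting $x$ and $y$ to construct a chain of balls $B_0,\dots,B_N$ with centres $z_i\in\gamma_{xy}$ and radii $r_i$, designed so that (a) $\Lambda B_i\subset B(x_0,R)$, making \eqref{PI} available on each $B_i$; (b) $\phi$ is comparable to $\phi(z_i)$ throughout $\Lambda B_i$, with constants depending only on $\theta$ and $C_\mu$; (c) consecutive balls overlap enough that the telescoping
$$|f(x)-f(y)|\leq |f(x)-f_{B_0}|+\sum_{i=0}^{N-1}|f_{B_i}-f_{B_{i+1}}|+|f_{B_N}-f(y)|$$
together with \eqref{PI} applied to each term (and Lebesgue points for the first and last) yields
$$|f(x)-f(y)|^p\leq C\sum_{i=0}^{N} r_i^p\vint_{\Lambda B_i}g_f^p\, d\mu;$$
and (d) the family $\{\Lambda B_i\}$ has bounded overlap as $(x,y)$ varies over $B(x_0,r)\times B(x_0,r)$. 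Substituting this pointwise bound into the double integral and exchanging the order of integration, property (b) allows one to replace $\phi(x)\phi(y)$ by $\phi(z)\Phi$ on each $\Lambda B_i$ up to fixed constants, and the summation in $i$ combined with $r_i\leq r$ then delivers the target estimate after dividing by $\Phi^2$.

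The main obstacle is the chain construction itself. Because $\phi$ degenerates like $(R-d(\cdot,x_0))^\theta$ near the boundary of $B(x_0,R)$, requirement (a) forces $r_i$ to shrink proportionally to $R-d(z_i,x_0)$, while (c) and (d) demand bounded metric distortion of the chain, and (b) relies on the doubling behaviour of $\phi$ on each enlarged ball. Balancing all of these constraints simultaneously, and ensuring that $N$ stays under control (in the worst case logarithmically in the ratio of the maximum to the minimum of $\phi$ along $\gamma_{xy}$, which keeps the sum in $i$ harmless), is where the geodesic hypothesis enters in an essential way: it supplies the explicit spine along which the balls can be placed with controlled geometry.
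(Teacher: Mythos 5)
Your proposal is the same chaining-along-geodesics argument that the paper's one-line sketch points to via Saloff-Coste \cite[Theorem~5.3.4]{Saloff-Coste}, and you correctly identify the geodesic hypothesis as the ingredient that makes the chain construction possible near the boundary where $\phi$ degenerates. One small caution: passing from the telescoping bound on $|f(x)-f(y)|$ to a pointwise bound on $|f(x)-f(y)|^p$ by a bare application of H\"older would bring in a factor $N^{p-1}$, so in the Saloff-Coste scheme one distributes geometrically decaying weights over the chain (or, equivalently, routes every point through a fixed central ball rather than pairing $x$ with $y$ directly) so that the $\ell^{p'}$-norm of the weights is uniformly bounded; your remark that $N$ grows only logarithmically does not by itself suffice without this weighting.
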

\begin{remark}
  Lemma~\ref{weighted Poincare} will be used later in the proof of
  Lemma~\ref{measure estimate}. We stress, however, that apart from
  Lemma~\ref{weighted Poincare}, all other estimates prior to Lemma
  \ref{measure estimate} are valid without $X$ being geodesic.
\end{remark}

\subsection{Bombieri's and Giusti's abstract lemma}

A delicate step in the proof based on Moser's work is to use a
parabolic version of the John--Nirenberg inequality, i.e., exponential
integrability of BMO functions. To avoid the use of the parabolic BMO
class, the parabolic John--Nirenberg theorem is replaced with an
abstract lemma due to Bombieri and Giusti~\cite{BoGi}. Consult
\cite{Saloff-Coste} or \cite{KiKu} for the proof.

\begin{lemma} \label{abstractlemma} Let $\nu$ be a Borel measure and
  consider a collection of bounded measurable sets $U_\alpha$,
  $0<\alpha\leq 1$, with $U_{\alpha'}\subset U_\alpha$ if
  $\alpha'\leq\alpha$.

  Fix $0<\delta<1$, let $\theta, \gamma$, and $A$ be positive
  constants, and $0<q\leq \infty$. Moreover, if $q<\infty$, we assume
  that
\[
\nu(U_1)\leq A\nu(U_\delta)
\]
holds. Let $f$ be a positive measurable function on $U_1$ such that
for every $0<s\leq\min(1,q/2)$ we have
\[
\left(\vint_{U_{\alpha'}}f^q\,
d\nu\right)^{1/q}\leq\left(\frac{A}{(\alpha-\alpha')^\theta}\vint_{U_\alpha}f^s\,
d\nu\right)^{1/s}
\]
for every $\alpha,\alpha'$ such that
$0<\delta\leq\alpha'<\alpha\leq 1$. Assume further that $f$ satisfies
\[
\nu\left(\{x\in U_1:\; \log f>\lambda\}\right) \leq
\frac{A\nu(U_\delta)}{\lambda^\gamma}
\]
for all $\lambda>0$. Then
\[
\left(\vint_{U_\delta}f^q\, d\nu\right)^{1/q} \leq C,
\]
where $C=C(q,\delta,\theta,\gamma,A)$.
\end{lemma}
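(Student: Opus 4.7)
The plan is to follow the classical Bombieri--Giusti scheme, which combines the reverse H\"older-type hypothesis with the logarithmic distribution bound through a truncation argument and then iterates over a nested family of cylinders. I focus on the case $q<\infty$; the case $q=\infty$ is analogous with essential suprema replacing $L^q$-averages, and is in fact simpler since no H\"older estimate is needed in the first step.

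Write $\psi(\alpha) := \bigl(\vint_{U_\alpha} f^q\, d\nu\bigr)^{1/q}$; the goal is to bound $\psi(\delta)$. For a truncation level $K=e^\lambda$, I would first split
\[
\vint_{U_\alpha} f^s\, d\nu \leq e^{s\lambda} + \vint_{U_\alpha} f^s \chi_{\{\log f > \lambda\}}\, d\nu.
\]
Applying H\"older's inequality with conjugate exponents $q/s$ and $q/(q-s)$ to the tail, together with the logarithmic bound and the monotonicity $\nu(U_\delta)\leq\nu(U_\alpha)$, yields
\[
\vint_{U_\alpha} f^s \chi_{\{\log f > \lambda\}}\, d\nu \leq \psi(\alpha)^s \left(\frac{A}{\lambda^\gamma}\right)^{1-s/q}.
\]
Inserting this into the hypothesis and taking $s$-th roots (using $(a+b)^{1/s}\leq 2^{1/s-1}(a^{1/s}+b^{1/s})$ for $0<s\leq 1$) produces an inequality of the schematic shape
\[
\psi(\alpha') \leq M(\tau,\lambda) + \eta(\tau,\lambda)\,\psi(\alpha), \qquad \tau:=\alpha-\alpha'.
\]
Selecting $\lambda$ as a suitable negative power of $\tau$ then forces $\eta\leq 1/2$, leaving a controlled, though rapid, dependence of $M$ on $1/\tau$.

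The remaining step is to iterate along a nested sequence $\delta=\alpha_0<\alpha_1<\cdots\nearrow 1$, giving formally
\[
\psi(\delta) \leq \sum_{k=0}^{\infty} 2^{-k}\, M(\tau_k,\lambda_k) + \lim_{n\to\infty} 2^{-n}\psi(\alpha_n).
\]
The main obstacle is that $M$ grows super-exponentially in $1/\tau$, so that a naive geometric choice $\tau_k = c\,2^{-k}$ produces a divergent series; one has to balance the spacing $\tau_k$ against the rate of growth of $M$, and this balancing is the delicate heart of the Bombieri--Giusti argument. The residual $2^{-n}\psi(\alpha_n)$ is handled by first carrying out the entire argument with $f$ replaced by $\min(f,N)$ --- for which $\psi$ is trivially finite at every level $\alpha$ --- and then passing $N\to\infty$ via monotone convergence, noting that the resulting constant depends only on $q,\delta,\theta,\gamma$, and $A$.
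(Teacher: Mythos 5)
The paper does not prove Lemma~\ref{abstractlemma} --- it refers the reader to Saloff-Coste's book and to Kinnunen--Kuusi --- so I am assessing your sketch on its own terms. You correctly identify the two main ingredients: the truncation split of $\vint_{U_\alpha}f^s\,d\nu$ at level $e^{s\lambda}$, and the H\"older estimate (with exponents $q/s$ and $q/(q-s)$) combined with the logarithmic distribution bound to control the tail. Both of these computations are carried out correctly, and the regularization via $\min(f,N)$ to handle the residual term is also the right idea.

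The gap is exactly where you flag it, and it is a genuine one: the iteration scheme you set up does not close. Enforcing $\eta(\tau,s,\lambda)\leq 1/2$ forces $\lambda \gtrsim \tau^{-\theta/(\gamma(1-s/q))}$, so $M(\tau,s,\lambda)\sim e^{\lambda}$ grows like $\exp\bigl(c\,\tau^{-c'}\bigr)$ with $c'=\theta/(\gamma(1-s/q))\geq\theta/\gamma$. If $\sum_k 2^{-k}M_k<\infty$ then $2^{-k}M_k\to 0$, which forces $\tau_k^{-c'}=o(k)$, i.e.\ $\tau_k\gtrsim k^{-1/c'}$, and then $\sum_k\tau_k$ diverges whenever $c'\geq 1$. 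Since $1-s/q\leq 1$ always, this means the scheme fails for all $\gamma\leq\theta$, and when $q\leq 2$ (where $s\leq q/2$ forces $1-s/q\geq 1/2$) it fails for all $\gamma\leq 2\theta$ --- a nonempty and unavoidable regime, since $\theta$ and $\gamma$ are independent parameters in the lemma. Allowing $\eta_k$ to vary in $[1/2,1)$, or $s_k$ to vary in $(0,\min(1,q/2)]$, does not rescue this: the lower bound on $\lambda_k$ and hence on $M_k$ is essentially insensitive to those choices. The actual Bombieri--Giusti argument therefore does not attempt to absorb $\psi(\alpha)$ through a geometrically decaying prefactor. Instead, one works at the level of $\log\psi$, chooses the truncation threshold $\lambda$ proportional to $\log\psi$ itself rather than as a function of $\tau$ alone, and exploits the resulting \emph{self-improving} inequality between $\log\psi(\alpha')$ and $\log\psi(\alpha)$ together with a dyadic case split; this is how the super-exponential dependence on $1/\tau$ is avoided. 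To complete the proof you would need to replace the additive iteration $\psi(\alpha')\leq M+\tfrac12\psi(\alpha)$ with this logarithmic self-improvement mechanism.
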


\section{Reverse H\"older inequalities for parabolic superminimizers}
\label{sect:super}

In this section, we prove an energy estimate for parabolic
superminimizers. After this, using the energy estimate we prove a
reverse H\"older inequality for negative powers of parabolic
superminimizers.

Establishing energy estimates for parabolic superminimizers is based
on substituting a suitably chosen test function into the
inequality~\eqref{minimizer}, and then performing partial integration
to extract the desired inequality from it. While doing this, we take
the time derivative of $u^{p-1}$, even though $u$ is not assumed to
have sufficient time regularity for this. Therefore, the reader should
consider the time derivation of $u$ as being formal. Justifications
for the formal treatment will be given in
Remark~\ref{timeRegularisation}.

\begin{lemma} \label{superCacc} Let $u> 0$ be a parabolic
  superminimizer, locally bounded away from zero, and $0<\eps\neq p-1$. Then
\begin{align*}
  \esssup_{0<t<T}&\int_\Om u^{p-1-\eps}\phi^p\, d\mu+\int_{\textrm{supp}(\phi)}   g_u^pu^{-1-\eps}\phi^p\, d\nu \\
  & \leq C_1\int_{\textrm{supp}(\phi)} u^{p-1-\eps}g_\phi^p\, d\nu+
  C_2\int_{\textrm{supp}(\phi)} u^{p-1-\eps}|(\phi^p)_t|\, d\nu
\end{align*}
for every $\phi\in \Lip_0(\Omega_T)$, $0\leq \phi \leq 1$, where
\begin{align*}
C_1  = \left(\frac{p}{\eps}\right)^p\left(1+ \frac{\eps |p-1-\eps|}{p(p-1)}\right), \quad 
\quad C_2  =\left(1+ \frac{p(p-1)}{\eps |p-1-\eps|}\right).
\end{align*}

\end{lemma}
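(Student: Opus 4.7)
The plan is to insert the nonnegative test function $\varphi = u^{-\eps}\phi^p$ into the parabolic superminimizer inequality~\eqref{minimizer}. After a Steklov time-averaging of $u$, this $\varphi$ can be regarded as a Lipschitz function with compact support in $\Omega_T$ (using that $u$ is locally bounded away from zero and that $0\le\phi\le 1$ with $\phi\in\Lip_0(\Omega_T)$); the same averaging simultaneously justifies the formal time derivative $u_t$ that appears below, as indicated in Remark~\ref{timeRegularisation}.

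For the parabolic part, the chain rule yields
\[
p u^{p-1}\varphi_t = -\tfrac{p\eps}{p-1-\eps}(u^{p-1-\eps})_t\phi^p + p u^{p-1-\eps}(\phi^p)_t,
\]
which is well defined because $\eps\neq p-1$. Testing against an additional one-sided Lipschitz cutoff $\chi_h(t)$ approximating $\chi_{(0,\tau)}$ for fixed $\tau\in(0,T)$, integrating by parts in $t$ and sending $h\to 0$ produces a boundary contribution proportional to $\tfrac{p\eps}{p-1-\eps}\int_\Omega u^{p-1-\eps}(\cdot,\tau)\phi^p(\cdot,\tau)\,d\mu$ together with an interior term $\tfrac{p(p-1)}{p-1-\eps}\iint u^{p-1-\eps}(\phi^p)_t\,d\nu$. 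Taking $\esssup_\tau$ at the end feeds the $\esssup_{0<t<T}$-piece of the lemma (the absolute value $|p-1-\eps|$ in the final constants arises from absorbing signs).

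For the gradient part, viewing $u+\varphi=F(u,\phi)$ with $F(a,b)=a+a^{-\eps}b^p$, the upper-gradient chain and product rules give
\[
g_{u+\varphi}\le|1-\eps u^{-\eps-1}\phi^p|\,g_u+p u^{-\eps}\phi^{p-1}g_\phi.
\]
On the region $\{s:=\eps u^{-\eps-1}\phi^p\le 1\}$, the convexity estimate $(1-s)^p\le 1-ps$, combined with the elementary $(x+y)^p\le(1+\delta)x^p+C(\delta,p)y^p$ for $\delta>0$, produces
\[
\int g_{u+\varphi}^p-\int g_u^p\le -p\eps\int g_u^p u^{-1-\eps}\phi^p\,d\nu+p^2\int g_u^{p-1}u^{-\eps}\phi^{p-1}g_\phi\,d\nu+R,
\]
with $R$ gathering lower-order binomial terms and the (separately estimated) contribution from the complementary set $\{s>1\}$. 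A Young inequality with exponents $p/(p-1)$ and $p$ splits the cross term as $\delta\,g_u^p u^{-1-\eps}\phi^p+C(\delta,p)\,u^{p-1-\eps}g_\phi^p$, where the weights match precisely because $-\eps=\tfrac{p-1}{p}(-1-\eps)+\tfrac{1}{p}(p-1-\eps)$.

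Plugging the parabolic and gradient estimates back into~\eqref{minimizer} puts both $p\eps\int g_u^p u^{-1-\eps}\phi^p$ and the boundary term on the LHS with definite sign; choosing $\delta$ proportional to $\eps$ absorbs the stray gradient piece and produces the factor $(p/\eps)^p$ in $C_1$, while a further Young inequality on the interior time term at the scale $\eps|p-1-\eps|/(p(p-1))$ produces $C_2$; taking $\esssup_\tau$ then completes the statement. The main obstacle is the gradient step: unlike the Euclidean case where $\nabla(u+\varphi)$ is an exact linear combination, here only the chain-rule upper bound is available, so the negative Caccioppoli sign must be extracted from $|1-s|^p$ via convexity, with care near $s=1$ to control the complementary regime; the time integration by parts is a secondary technicality handled by Steklov averaging as in earlier metric-space parabolic work.
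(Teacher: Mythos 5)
Your overall strategy matches the paper's: test with $u^{-\eps}\phi^p$ (plus a time cutoff), use the chain rule for the time part and the chain/product upper-gradient rules for the gradient part, and close the estimate. However, there is a genuine gap in the way you handle the gradient term, and your argument is needlessly complicated elsewhere.

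The gap: you split into $\{s:=\eps u^{-\eps-1}\phi^p\le1\}$ and $\{s>1\}$ and say the complementary set $\{s>1\}$ is "separately estimated," but this is precisely where the argument can fail. On $\{s>1\}$ the upper-gradient bound gives $g_{u+\varphi}\le (s-1)\,g_u+p u^{-\eps}\phi^{p-1}g_\phi$, and the coefficient $(s-1)^p$ in front of $g_u^p$ is positive and not small, so it does not produce the negative Caccioppoli term but rather an \emph{additional positive} contribution that the claimed right-hand side cannot absorb; and since $\{s>1\}$ is exactly the set where $u$ is small, there is no a priori control of $(s-1)^p$ in terms of the desired quantities. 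The paper avoids this completely by exploiting scale invariance: it replaces $u$ by $v=\alpha u$ with $\alpha$ chosen (using local boundedness away from zero) so that $1-\eps\phi^p v^{-\eps-1}>0$ everywhere on $\supp\phi$, proves the estimate for $v$, and then divides by $\alpha^{p-1-\eps}$ at the end, which is legitimate because every term in the inequality is homogeneous of degree $p-1-\eps$ in $u$. You would need to add this scaling step (or something equivalent) to make the proof close.

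A secondary point: even on $\{s\le1\}$ the paper's route is cleaner than yours. Instead of the two-step chain $(1-s)^p\le 1-ps$ followed by $(x+y)^p\le(1+\delta)x^p+C(\delta,p)y^p$ and a Young absorption, the paper writes the upper-gradient bound as a convex combination $(1-s)g_v+s\cdot\frac{pv}{\eps\phi}g_\phi$ and applies the exact inequality $((1-s)a+sb)^p\le(1-s)a^p+sb^p$ for $s\in[0,1]$; this gives the $(p/\eps)^p$ factor directly with no cross term and no $\delta$ to tune. Your Young-inequality splitting with the exponent identity $-\eps=\frac{p-1}{p}(-1-\eps)+\frac1p(p-1-\eps)$ is correct, but unnecessary once the convex-combination identity is used. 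Finally, the paper uses a two-sided cutoff $\chi^h_{[\tau_1,\tau_2]}$ and sets $\tau_1,\tau_2$ in two different ways depending on the sign of $p-1-\eps$ to obtain both the integral term and the $\esssup_t$ term; your sketch of "$\chi_h\approx\chi_{(0,\tau)}$, then take $\esssup_\tau$" does not address this sign-dependence and should be filled in.
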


\begin{proof} Assume $\varepsilon >0$, $\varepsilon \neq p-1$.  Let
  $\phi$ be a function $0\leq \phi \leq 1$, $\phi \in
  \textrm{Lip}_0(\Om_T)$. Since $\phi$ has compact support, we can choose $0<t_1<t_2<T$ such that $\phi(x,t)=0$ $\mu$-almost everywhere when $t\not \in(t_1,t_2)$.
 Since by assumption $u$ is locally bounded away from zero, we may assume a positive constant $\alpha>0$ such that after denoting $v=\alpha u$ we have $1-\varepsilon \phi^p
  v^{-\varepsilon -1} >0$ $\nu$-almost everywhere in the support of $\phi$. It then follows that $\nu$-almost everywhere in the support of $\phi$, we have
  \begin{align} \label{conservation of minus} g_{v+\phi^p v^{-\varepsilon}} &\leq
    (1-\eps\phi^p v^{-\eps-1})g_v+pv^{-\eps}\phi^{p-1}g_\phi.
\end{align}
That \eqref{conservation of minus} does indeed hold can be seen in the
following way: Let $t$ be such that $v(\cdot, t)\in
N^{1,p}(\Omega)$. Consider any arc-length parametrization $\gamma$ of
a rectifiable path on which $v(\cdot, t)$ is absolutely
continuous. Since $\phi(\cdot, t)$ is Lipschitz-continuous, it is
absolutely continuous on $\gamma$. Define
$h:[0,\textrm{length}(\gamma)]\rightarrow [0,\infty)$ by
\begin{align*}
 h(s)=v(\gamma(s),t)+\phi(\gamma (s),t)^ p v(\gamma(s),t)^ {-\varepsilon}.
\end{align*}
Then $h$ is absolutely continuous, and so  we have
\begin{align*}
  h'(s)=&(1-\varepsilon \phi(\gamma(s),t)^ p v(\gamma(s),t)^ {-\varepsilon -1}) \frac{\partial v(\gamma (s),t)}{\partial s}\\
  &+p \phi(\gamma(s),t)^{p-1}v(\gamma(s),t)^ {-\varepsilon}
  \frac{\partial \phi(\gamma(s),t)}{\partial s}
\end{align*}
for almost every $s\in [0,\textrm{length}(\gamma)]$ with respect to
the Lebesgue measure. We know that
\begin{align*}
  \left | \frac{\partial v(\gamma (s),t)}{\partial s}\right| \leq
  g_v(\gamma(s),t), \quad\left |\frac{\partial
      \phi(\gamma(s),t)}{\partial s}\right |\leq g_\phi (\gamma(s),t)
\end{align*}
for almost every $s\in [0, \textrm{length}(\gamma)]$. Hence 
\begin{align*}
  \left |\frac{\partial (v+\phi^p v^{-\varepsilon})(\gamma(s),t)}{\partial s}\right |&=|h'(s)|\\
  &\leq (1-\varepsilon \phi(\gamma(s),t)^p v(\gamma(s),t)^{-\varepsilon -1})g_v(\gamma(s),t)\\
  &+p\phi(\gamma(s),t)^{p-1}v(\gamma(s),t)^ {-\varepsilon}
  g_\phi(\gamma(s),t)
\end{align*}
for almost every $s\in [0, \textrm{length}(\gamma)]$. The fact that
this holds for $p$-almost every rectifiable path $\gamma$ now implies
\eqref{conservation of minus}. Using the convexity of the mapping $t\mapsto t^p$ we have
\begin{equation}\label{upper gradient on the right side}
\begin{split}
g_{v+\phi^p v^{-\varepsilon}}^p & \leq \left((1-\eps\phi^pv^{-\eps-1})g_v
+\eps\phi^pv^{-\eps-1}\frac{pv}{\eps\phi}g_\phi\right)^p \\
& \leq (1-\eps\phi^pv^{-\eps-1})g_v^p +p^p\eps^{1-p}v^{p-\eps-1}g_\phi^p. 
\end{split}
\end{equation}
Assume $0< \tau_1 < \tau_2 <T$. For a small enough $h>0$, denote
\begin{align*}
\chi_{[\tau_1,\tau_2]}^h(t)=
\begin{cases}
(t-\tau_1)/h,&\tau_1<t<t<\tau_1+h\\
1,& \tau_1+h\leq t \leq \tau_2-h \\
(\tau_2-t)/h,& \tau_2-h<t<\tau_2\\
0, &\textrm{otherwise}.
\end{cases}
\end{align*}
Integrating by parts, 
we find
\begin{align*}
  & \int_{\tau_1}^{\tau_2}\int_{\Omega}v^{p-1}(\phi^p v^{-\varepsilon} \chi_{[\tau_1, \tau_2]}^h)_t\, d\mu\,dt
  =\frac{(p-1)}{p-1-\varepsilon} \\&\,\cdot\left(\int_{\tau_1}^{\tau_2}\int_\Omega
    v^{p-1-\varepsilon}\phi^p (\chi^h_{[\tau_1,\tau_2]})_t\, d\mu\,dt   +\int_{\tau_1}^{\tau_2}\int_\Omega v^{p-1-\varepsilon} (\phi^p)_t \chi^h_{[\tau_1,\tau_2]}\,
    d\mu\,dt \right).
\end{align*}
After taking the limit $h\rightarrow 0$ in the expression above, and
using Lebesgue's differentiation theorem, we have
\begin{align*}
 & \lim_{h\rightarrow 0}\int_{\tau_1}^{\tau_2}\int_\Omega v^{p-1}(\phi^p v^{-\varepsilon} \chi_{[\tau_1,
  \tau_2]}^h)_t\, d\mu\,dt =\frac{(p-1)}{p-1-\varepsilon}\\
  &\quad  \cdot\left(-\left[\int_{\Omega\times \{t\}} v^{p-1-\varepsilon}\phi^p \, d\mu\right]_{t=\tau_1}^{\tau_2}  +\int_{\tau_1}^{\tau_2} \int_\Omega v^{p-1-\varepsilon} (\phi^p)_t\, d\mu\,dt \right).
\end{align*}
As $u$ is a positive parabolic superminimizer related to the doubly
nonlinear equation, also $v$ is a parabolic superminimizer.  Moreover,
by Remark~\ref{timeRegularisation} below, $\phi^p v^{-\varepsilon}
\chi_{[\tau_1, \tau_2]}^h$ is a nonnegative admissible test
function. Hence by the definition of a parabolic superminimizer and
\eqref{upper gradient on the right side} we have
\begin{align*}
  \begin{split}
    &\frac{p(p-1)}{p-1-\varepsilon} \left(-\left[\int_{\Omega\times
          \{t\}} v^{p-1-\varepsilon}\phi^p \,
        d\mu\right]_{t=\tau_1}^{\tau_2}
      +\int_{\tau_1}^{\tau_2}\int_\Omega v^{p-1-\varepsilon} (\phi^p)_t \, d\mu\,dt\right) \\
    &\leq \lim_{h\rightarrow 0} \left( -\int_{\textrm{supp}(\phi^p \chi_{[\tau_1,
  \tau_2]}^h)} g_v^p \, d\nu + \int_{\textrm{supp}(\phi^p \chi_{[\tau_1,
  \tau_2]}^h)} g_{v+\phi^p v^{-\varepsilon} \chi_{[\tau_1,
  \tau_2]}^h}^p \, d\nu \right) \\
    &\leq-\eps\int_{\textrm{supp}(\phi^p \chi_{[\tau_1,
  \tau_2]})} \phi^pv^{-\eps-1}g_v^p\, d\nu +
    p^p\eps^{1-p}\int_{\textrm{supp}(\phi^p \chi_{[\tau_1,
  \tau_2]})}v^{p-1-\eps}g_\phi^p\, d\nu.
  \end{split}
\end{align*}
 On one hand, setting $\tau_1=t_1$, and $\tau_2=t_2$, we obtain
\begin{equation}\label{estimate 1 for energy lemma}
\begin{split}
\int_{t_1}^{t_2}\int_{\Omega} \phi^pv^{-\eps-1}g_v^p\, d\mu \,dt\leq & \frac{p(p-1)}{\eps |p-1-\varepsilon |}\int_{t_1}^{t_2} \int_\Omega v^{p-1-\varepsilon} |(\phi^{p})_t| \, d\mu\, dt\\
&+\left(\frac{p}{\eps}\right)^p\int_{t_1}^{t_2} \int_\Omega v^{p-1-\eps}g_\phi^p\, d\mu\, dt.
\end{split}
\end{equation}
On the other hand, if $\varepsilon < p-1$, set $\tau_1=t$ and
$\tau_2=t_2$. If $\varepsilon > p-1$, set $\tau_1= t_1$ and
$\tau_2=t$.  We obtain
\begin{equation}\label{estimate 2 for energy lemma}
\begin{split}
 \qquad\frac{p(p-1)}{\varepsilon |p-1-\varepsilon |} &\int_{\Omega} v^{p-1-\varepsilon}(x,t) \phi^p(x,t) \, d\mu \\
  &\leq \frac{p(p-1)}{\eps |p-1-\varepsilon |}\int_{t_1}^{t_2}
  \int_\Omega v^{p-1-\varepsilon} |(\phi^{p})_t| \, d\mu\,
  dt \\
  &\qquad+\left(\frac{p}{\eps}\right)^p\int_{t_1}^{t_2} \int_\Omega
  v^{p-1-\eps}g_\phi^p\, d\mu\, dt.
  \end{split}
\end{equation}
This holds for almost every $t\in (t_1,t_2)$. Dividing \eqref{estimate
  2 for energy lemma} by $p(p-1)/\varepsilon|p-1-\varepsilon|$, and
adding the resulting expression to \eqref{estimate 1 for energy lemma}
yields the desired estimate for $v$, since the constants in the
inequality do not depend on $t\in(t_1,t_2)$ and $\phi$, $g_\phi$
vanish outside the support of $\phi$. The proof is completed by
dividing the resulting expression sidewise with the constant
$\alpha^{p-1-\varepsilon}$.
\end{proof}

\begin{remark}\label{timeRegularisation}
  We now give justifications for the formal treatment above. By a
  change of variable, it is straightforward to see that for a nonnegative
  parabolic super- or subminimizer $v$ and for an admissible test function $\psi$,
  for any small enough $s$, we have
\begin{align*} 
  p& \int_{\textrm{supp}(\psi)} v^{p-1}(x,t-s)\psi_t\, d\nu  + \int_{\textrm{supp}(\psi)} g_{v(x,t-s)}^p\, d\nu \\
  & \qquad \quad \leq
  \int_{\textrm{supp}(\psi)} g_{v(x,t-s)+\psi(x,t)}^p\, d\nu. 
\end{align*}
We multiply this inequality sidewise with a standard mollifier with
respect to the time variable $s$, and then integrate both sides with
respect to $s$. After using Fubini's theorem on the left side, this
yields
\begin{equation}\label{mollifiedMinimizingIneq}
\begin{split}
  p& \int_{\textrm{supp}(\psi)}(v^{p-1})_\sigma\psi_t\, d\nu +
  \left(\int_{\textrm{supp}(\psi)} g_{v}^p\,
    d\nu\right)_\sigma \\
  &\qquad \quad \leq \left(\int_{\textrm{supp}(\psi)}
    g_{v(x,t-s)+\psi(x,t)}^p\, d\nu \right)_\sigma.
    \end{split}
\end{equation} 
Here we have used the notation
\begin{align*}
  (v^{p-1})_\sigma(x,t)= \int_\R \theta_\sigma(s) v^{p-1}(x,t-s)\,ds,
\end{align*}
where $\theta$ is the standard mollifier and $\sigma>0$ is assumed to be
small enough so that everything stays in the time cylinder.  To be
precise, in the proof of Lemma \ref{superCacc} we then choose the test
function
\begin{align}\label{testfunctionSupCacc}
\psi=\phi^p ((v^{p-1})_\sigma) ^{-\varepsilon/(p-1)}\chi^h_{[\tau_1,\tau_2]}
\end{align}
with $\phi \in$Lip$_0(\Omega_{(t_1,t_2)})$.  The test function $\psi$
now has compact support and belongs to the space
$L^p(0,T;N^{1,p}(\Omega))$. By Lemma~2.7 in \cite{MMPP}, easily
adaptable for minimizers related to the doubly nonlinear equation,
$\psi$ can be plugged into the inequality
\eqref{mollifiedMinimizingIneq}. Similarly to the formal proof above,
partial integration is then performed to write the expression in a
form where $(v^{p-1})_\sigma$ is not differentiated with respect to
time. Once this is done we can take the limits $\sigma\rightarrow 0$
and $h \rightarrow 0$, which leads us back to the inequality above
\eqref{estimate 1 for energy lemma}. For details on justifying the
convergence of the upper gradient terms in
\eqref{mollifiedMinimizingIneq} as $\sigma\rightarrow 0$, we refer the
reader to \cite{MassSilj}.
\end{remark}

We prove next a reverse Hölder type inequality for negative powers of
parabolic superminimizers. The first step of the proof consists of
combining Sobolev's inequality with the energy esimate of
Lemma~\ref{superCacc}. Then, because the energy estimate is
homogeneous in powers, the obtained inequalities can be combined as in
Moser's iteration to complete the proof.

\begin{lemma} \label{weakHsuper1} Let $u>0$ be a parabolic
  superminimizer in $Q_r\subset \Om_T$, locally bounded away from zero
  and let $0<\delta<1$. Then there exist constants
  $C=C(C_\mu,C_p,\Lambda,p,\delta,T)$ and $\theta=\theta(C_\mu,p)$
  such that
\[
\essinf_{Q_{\alpha' r}}u \geq
\left(\frac{C}{(\alpha-\alpha')^\theta}\right)^{-1/q}
\left(\vint_{Q_{\alpha r}}u^{-q}\, d\nu \right)^{-1/q}
\]  
for every $0<\delta\leq\alpha'<\alpha\leq 1$ and for all $0<q\leq p$.
\end{lemma}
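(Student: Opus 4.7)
The plan is to apply a Moser iteration to $v:=u^{-1}$, upgrading an $L^q$ average bound into an essential supremum bound. The point of departure is Lemma~\ref{superCacc}, applied with $\varepsilon>p-1$ so that $\beta:=p-1-\varepsilon$ is a negative parameter. Setting $w:=u^{\beta/p}$ and invoking the chain rule for minimal upper gradients (which, through Remark~\ref{absolute continuity}, gives $g_w=(|\beta|/p)\,u^{\beta/p-1}g_u$ on $\{u>0\}$), I would rewrite the energy estimate as a Caccioppoli-type inequality for $w$:
\[
\esssup_{t}\int_\Omega w^p\phi^p\,d\mu + \int g_w^p\phi^p\,d\nu \;\leq\; C(\beta)\int w^p\bigl(g_\phi^p+|(\phi^p)_t|\bigr)\,d\nu,
\]
after absorbing the prefactor $(|\beta|/p)^p$ appearing on the upper-gradient term.

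Next, I would apply the Sobolev inequality \eqref{Sobo} to $w\phi$ on each time slice and combine it with the $L^\infty$-in-time, $L^p$-in-space bound from the first left-hand side term, via H\"older's inequality in space. This yields a gain of integrability by some factor $\chi>1$ depending only on $p$ and $q_\mu$; when $p\geq q_\mu$ any sufficiently large finite exponent suffices in place of $\kappa=\infty$. Choosing $\phi$ to be a standard Lipschitz cutoff between $Q_{\alpha_{j+1}r}$ and $Q_{\alpha_j r}$, so that $g_\phi\lesssim((\alpha_j-\alpha_{j+1})r)^{-1}$ and $|(\phi^p)_t|\lesssim T^{-1}((\alpha_j-\alpha_{j+1})r)^{-p}$, would convert the resulting estimate into a one-step reverse H\"older inequality of the form
\[
\Bigl(\vint_{Q_{\alpha_{j+1}r}}u^{-q_j\chi}\,d\nu\Bigr)^{1/(q_j\chi)} \;\leq\; \Bigl(\frac{C_j}{(\alpha_j-\alpha_{j+1})^{\theta}}\Bigr)^{1/q_j}\Bigl(\vint_{Q_{\alpha_j r}}u^{-q_j}\,d\nu\Bigr)^{1/q_j}.
\]

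Then I would iterate, starting from $q_0=q$, with $q_j=q\chi^j$ and $\alpha_j=\alpha'+2^{-j}(\alpha-\alpha')$, so that $\alpha_j-\alpha_{j+1}\asymp 2^{-j}(\alpha-\alpha')$. The successive estimates telescope, and passing to the limit $j\to\infty$ on the left produces $\esssup_{Q_{\alpha'r}}u^{-1}=(\essinf_{Q_{\alpha'r}}u)^{-1}$; raising to $-q$ then yields the claimed inequality. The main technical obstacle is controlling the constants $C_j$: from Lemma~\ref{superCacc} (together with the absorbed chain-rule factor) they grow only polynomially in $q_j=q\chi^j$, so the series $\sum_j(\log C_j)/q_j$ and $\sum_j j/q_j$ both converge geometrically and collapse the accumulated prefactor to $C\,(\alpha-\alpha')^{-\theta'}$ with some $\theta'=\theta'(C_\mu,p)$; the dependence on $T$ and $\delta$ enters only through the time-width $T(\alpha r)^p$ and the lower bound $\alpha'\geq\delta$ needed to keep the cylinders comparable under doubling. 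Beyond this bookkeeping the argument is formally identical to the classical Moser iteration, with the metric framework causing no essential difficulty since every manipulation proceeds through Newtonian upper gradients and the chain rule noted above.
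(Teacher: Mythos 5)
Your overall strategy --- a Moser iteration driven by Lemma~\ref{superCacc}, combined with the Sobolev inequality~\eqref{Sobo} on time slices, the $L^\infty_t L^p_x$ bound from the energy estimate, and a telescoping family of cutoffs --- is the same as the paper's. But there is a concrete gap in the choice of starting exponent, and it matters because the lemma asserts a constant $C$ that does \emph{not} depend on $q$.

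You propose to start the iteration at $q_0=q$, setting $p-1-\varepsilon_j=-q_j=-q\chi^j$, so that $\varepsilon_0=p-1+q$. Now inspect the constants $C_1,C_2$ of Lemma~\ref{superCacc}: $C_2=1+p(p-1)/\bigl(\varepsilon|p-1-\varepsilon|\bigr)$ blows up like $1/q$ as $\varepsilon\to p-1$, i.e.\ as $q\to 0$. (The same is true of the early $\varepsilon_j$ with $q\chi^j$ small.) Tracking $\prod_j C(\varepsilon_j)^{-1/q_j}$ shows the accumulated prefactor degenerates on the order of $q^{c/q}$ as $q\to 0$; rewriting the outcome in the form $\bigl(C/(\alpha-\alpha')^\theta\bigr)^{-1/q}$ forces $C$ to carry a power of $1/q$. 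So your argument proves a version of the statement only with $C=C(\ldots,q)$ blowing up as $q\to 0$. This is genuinely weaker: note that the power-mean inequality gives $(\vint u^{-q})^{-1/q}\ge(\vint u^{-p})^{-1/p}$ for $q<p$, so the small-$q$ case is \emph{not} a trivial corollary of the case $q=p$, and the $q$-uniformity is exactly what is needed later when Lemma~\ref{weakHsuper1} is fed into the Bombieri--Giusti lemma (Lemma~\ref{abstractlemma}), whose hypothesis must hold for all $0<s\le\min(1,q/2)$ with one fixed constant $A$.

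The paper sidesteps this by running the iteration with $p-1-\varepsilon_j=-p\gamma^j$, so that $\varepsilon_j\ge 2p-1$ for every $j$; all $\varepsilon_j$ are then uniformly separated from both $0$ and $p-1$, and the constant obtained is genuinely $q$-independent but only covers $q=p$. The extension down to arbitrary $0<q\le p$ is then achieved not by re-running the iteration but by a separate real-analysis interpolation/absorption argument (the reference is \cite[Theorem~3.38]{HeiKiMar}), which upgrades an $L^p$-to-$\essinf$ bound to an $L^q$-to-$\essinf$ bound for all smaller $q$. Replacing your choice $q_0=q$ by $q_0=p$, and then appending such an argument for $q<p$, would close the gap; the rest of your sketch (the chain rule for $g_{u^{\beta/p}}$, the Sobolev/H\"older combination, the cutoff scaling, and the geometric control of the product of constants) is in line with the paper.
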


\begin{proof}
  Let us fix $\alpha',\alpha$ such that
  $0<\delta\leq\alpha'<\alpha\leq 1$, and divide the interval
  $(\alpha',\alpha)$ as follows: $\alpha_0 = \alpha$, $\alpha_\infty =
  \alpha'$, and
\[
\alpha_j = \alpha-(\alpha-\alpha')(1-\gamma^{-j}),
\]
where $\gamma = 2- p/\kappa = 1+ (\kappa-p)/\kappa >1$. We set
\[
Q_j = Q_{\alpha_j r} = B_j\times T_j = B(x_0,\alpha_j r) \times
(t_0-T(\alpha_jr)^p,t_0+T(\alpha_j r)^p),
\]
and choose the sequence of test-functions $\{\phi_j\}_{j=0}^{\infty}$
so that $\supp(\phi_j)\subset Q_j$, $0\leq\phi_j\leq 1$ on $Q_j$, and
$\phi_j=1$ in $Q_{j+1}$. Moreover, let each $\phi_j$ be such that
\begin{align*}
g_{\phi_j} \leq \frac{4\gamma^j}{(\alpha-\alpha')r} \quad
\textrm{and} \quad |(\phi_j)_t| \leq
\frac{1}{T}\left(\frac{4\gamma^j}{(\alpha-\alpha')r}\right)^p.
\end{align*}
Assume $\varepsilon>0$, $\varepsilon \neq p-1$. We have
\[
g_{u^{(p-1-\varepsilon)/p}\phi_j}^p \leq 2^{p-1}u^{p-1-\varepsilon}g_{\phi_j}^p+2^{p-1}\left(\frac{|p-1-\varepsilon|}{p}\right)^pu^{-\varepsilon-1}g_u^p\phi_j^p.
\]
Using H\"older's inequality brings us to the estimate
\begin{align*}
  &\vint_{T_{j+1}} \vint_{B_{j+1}}u^{(p-1-\varepsilon)\gamma}\, d\mu dt \\
  & \leq \vint_{T_{j+1}}\left(\vint_{B_{j+1}}
    u^{(p-1-\varepsilon)}\phi_j^p\,
    d\mu \right)^{(\kappa-p)/\kappa}\left(\vint_{B_{j+1}}(u^{(p-1-\varepsilon)/p}\phi_j)^\kappa\,d\mu \right)^{p/\kappa}\, dt \\
  & \leq \frac{|T_j|\mu(B_j)}{|T_{j+1}|\mu(B_{j+1})}\left(\esssup_{T_j}\vint_{B_j} u^{(p-1-\varepsilon)}\phi_j^p\,d\mu \right)^{(\kappa-p)/\kappa} \\
  & \qquad \qquad
  \cdot\vint_{T_j}\left(\vint_{B_j}(u^{(p-1-\varepsilon)/p}\phi_j)^\kappa\,d\mu
  \right)^{p/\kappa}\, dt.
\end{align*}
Observe that $|T_j| = 2T(\alpha_j r)^p$ and $\alpha_{j+1}\geq
\min\{\delta,(1+\gamma)^{-1}\}\alpha_j$. Thus the multiplicative
constant on the right-hand side is bounded by a constant independent
of $j,r,T, \alpha'$, and $\alpha$. We estimate the last term in the
preceding inequality by Sobolev's inequality (see \eqref{Sobo}). We
find
\begin{align*}
  & \left(\vint_{B_j}(u^{(p-1-\varepsilon)/p}\phi_j)^\kappa\,d\mu \right)^{p/\kappa} \leq Cr^p\vint_{B_j}g_{u^{(p-1-\varepsilon)/p}\phi_j}^p\,d\mu  \\
  & \quad \leq Cr^p\vint_{B_j}\left(u^{p-1-\varepsilon}g_{\phi_j}^p +
  \left(\frac{|p-1-\varepsilon|}{p}\right)^p u^{-\varepsilon-1}g_u^p\phi_j^p\right)\,
  d\mu ,
\end{align*}
where $C=C(C_\mu, C_p, \Lambda,p)$.
Since $\varepsilon>0$, $\varepsilon\neq p-1$, we may use Lemma~\ref{superCacc}
to obtain 
\begin{align*}
  &\vint_{T_{j+1}} \vint_{B_{j+1}}u^{(p-1-\varepsilon)\gamma}\, d\mu \,dt \leq C\left(\esssup_{T_j}\vint_{B_j} u^{p-1-\varepsilon}\phi_j^p\,d\mu \right)^{(\kappa-p)/\kappa} \\
  & \qquad \qquad \cdot\frac{C}{T\delta^p
  }\int_{T_j}\vint_{B_j}\left(u^{p-1-\varepsilon} g_{\phi_j}^p+
  \left(\frac{|p-1-\varepsilon|}{p}\right)^p u^{-\varepsilon-1}g_u^p\phi_j^p\right)\, d\mu \, dt \\
  & \leq
  C\left(\int_{T_j}\vint_{B_j}u^{p-1-\varepsilon}\left(C_1g_{\phi_j}^p+C_2|(\phi_j^p)_t|\right)\,
    d\mu \,dt \right)^{(\kappa-p)/\kappa} \\
  & \qquad \cdot\frac{C}{T\delta^p
  }\int_{T_j}\vint_{B_j}\left(u^{p-1-\varepsilon} g_{\phi_j}^p+
  |p-1-\varepsilon|^p u^{p-1-\varepsilon}(C_1 g_{\phi_j}^p+C_2|(\phi_j^p)_t|)\right)\, d\mu \, dt \\
  & \leq C(1+|p-1-\varepsilon|^p)
  \left(\frac{\gamma^{jp}}{(\alpha-\alpha')^p}\vint_{T_j}\vint_{B_j}u^{p-1-\varepsilon}\,
  d\mu \,dt\right)^\gamma,
\end{align*}
where $C=C(\varepsilon, C_\mu,C_p,\Lambda,p, \delta,T)$ is uniformly
bounded for every $\varepsilon$, except in the neighborhood of
$\varepsilon=0$.  
For each $j=0,1,\ldots\,$ we can now use the above estimate with
$\varepsilon_j\geq 2p-1$ chosen in such a way that
$p-1-\varepsilon_j=-p\gamma^j$, to write
\begin{equation}\label{iterative estimate}
\begin{split}
  &\left(\vint_{Q_{j+1}} u^{-p\gamma^{j+1}}\, d\nu
  \right)^{-1/p\gamma^{j+1}}
  \\
  & \geq (C2p^p
  \gamma^{pj})^{-1/p\gamma^{j+1}}\left(\frac{\gamma^{jp}}{(\alpha-\alpha')^p}\right)^{-1/p\gamma^j}\left(\vint_{Q_j}u^{-p\gamma^j}\,
    d\nu \right)^{-1/p\gamma^j},
\end{split}
\end{equation}
where $C=C(C_\mu,C_p,\Lambda,p, \delta,T)$. By iterating this, since
$\gamma>1$, we find that
\begin{align*}
  \essinf_{Q_\infty} u & \geq (Cp)^{\sum_{j=1}^\infty -1/\gamma^{j}} \gamma
  ^{\sum_{j=0}^\infty-(1+\gamma)j/\gamma^{j}}\left(\frac1{(\alpha-\alpha')}\right)^{\sum_{j=0}^\infty
    -1/\gamma^{j}}\left(\vint_{Q_0}u^{-p}\,
  d\nu \right)^{-1/p} \\
  &  =
 \left(\frac{C}{(\alpha-\alpha')}\right)^{-\gamma/(\gamma-1)}\left(\vint_{Q_0}u^{-p}\,
  d\nu \right)^{-1/p},
\end{align*}
where the constant $C=C(C_\mu,C_p,\Lambda,p,\delta,T)$ is positive and
finite.  The proof is now completed for any $0<q\leq p$ by using a
result from real analysis (see \cite[Theorem 3.38]{HeiKiMar}).
\end{proof}

We also prove a reverse H\"older inequality for positive powers of
parabolic superminimizers.

\begin{lemma} \label{weakRHsuper} Let $u>0$ be a parabolic
  superminimizer in $Q_r\subset \Omega_T$ which is locally bounded
  away from zero, and $0<\delta<1$. Then there exist constants $0<C =
  C(C_\mu, C_p,\Lambda,p,q,\delta,T)$ and $\theta=\theta(C_\mu,p)$
  such that
\[
\left(\vint_{Q_{\alpha'r} }u^q\, d\nu \right)^{1/q} \leq
\left(\frac{C}{(\alpha-\alpha')^\theta}\right)^{1/s}\left(\vint_{Q_{\alpha
      r}}u^s\, d\nu \right)^{1/s}
\]
for all $0<\delta\leq\alpha'<\alpha\leq 1$ and for all $0<s<q<(p-1)(2-p/\kappa)$ and $\kappa$ is as in \eqref{eq:Sobexp}.
\end{lemma}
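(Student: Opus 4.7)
The strategy mirrors the proof of Lemma~\ref{weakHsuper1}, but restricted to the range $0<\varepsilon<p-1$, so that the exponent $\sigma := p-1-\varepsilon$ is positive and we can talk about positive powers of $u$. The key observation is that each Sobolev step raises the exponent by the factor $\gamma = 2 - p/\kappa > 1$, so a single application already covers all $q < (p-1)\gamma$; iteration is not available here because $\sigma\gamma > p-1$, outside the admissible range of Lemma~\ref{superCacc} applied with positive $\varepsilon$. Missing low-exponent cases will be recovered at the end by an abstract real-analysis step.

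Given $q < (p-1)\gamma$, choose $\varepsilon := p-1-q/\gamma$, so that $0 < \varepsilon < p-1$ and $\sigma := p-1-\varepsilon = q/\gamma$ satisfies $\sigma\gamma = q$. Repeat the computation from the proof of Lemma~\ref{weakHsuper1} with this specific $\varepsilon$: on each time slice apply the Sobolev inequality~\eqref{Sobo} to $u^{\sigma/p}\phi$, use the chain/product rule
\[
g_{u^{\sigma/p}\phi}^{p} \le 2^{p-1}u^{\sigma}g_\phi^{p} + 2^{p-1}\bigl(\sigma/p\bigr)^{p}u^{-1-\varepsilon}g_u^{p}\phi^{p},
\]
interpolate in time via Hölder between $\esssup_t\vint_B u^{\sigma}\phi^p\,d\mu$ and $\vint_t\!\vint_B(u^{\sigma/p}\phi)^{\kappa}\,d\mu\,dt$ (the exponent $\gamma = (\kappa-p)/\kappa + 1$ is exactly what this interpolation produces), and apply Lemma~\ref{superCacc} to control both the sup term and the $g_u^{p}u^{-1-\varepsilon}\phi^{p}$ term. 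The same telescoping choice of test functions used in Lemma~\ref{weakHsuper1}, but applied only once, yields
\[
\left(\vint_{Q_{\alpha'r}} u^{q}\,d\nu\right)^{1/q} \le \left(\frac{C}{(\alpha-\alpha')^{\theta}}\right)^{1/\sigma}\left(\vint_{Q_{\alpha r}} u^{\sigma}\,d\nu\right)^{1/\sigma},
\]
with $\theta = \theta(C_\mu,p)$ and $C = C(C_\mu,C_p,\Lambda,p,q,\delta,T)$. The $q$-dependence enters only through $\varepsilon$, and since $\varepsilon \in (0,p-1)$ is bounded away from both endpoints by our choice, the constants $C_1,C_2$ from Lemma~\ref{superCacc} remain finite.

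This establishes the desired inequality for the specific pair $(\sigma,q)$. For $\sigma \le s < q$, Hölder's inequality applied to the average on the right-hand side gives the claim immediately. For $0 < s < \sigma$, invoke the self-improvement lemma from real analysis cited in the proof of Lemma~\ref{weakHsuper1} (\cite[Theorem~3.38]{HeiKiMar}), which allows the exponent $\sigma$ on the right to be lowered to any $s > 0$ at the cost of increasing $\theta$ and $C$ but preserving the structure of the estimate. Combining these two cases gives the lemma in full generality.

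The only delicate step is the first one: one must verify that the $\varepsilon$-dependent constants in $C_1$ and $C_2$ of Lemma~\ref{superCacc} are uniformly bounded for $\varepsilon$ in compact subintervals of $(0,p-1)$, which they plainly are; the blow-up as $\varepsilon \to 0$ or $\varepsilon \to p-1$ corresponds exactly to $q \to 0$ or $q \to (p-1)\gamma$, and both are excluded by hypothesis and by the $q$-dependence allowed in $C$. No other new difficulty arises beyond the ones already addressed in Lemma~\ref{weakHsuper1}.
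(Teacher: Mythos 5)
Your plan is genuinely different from the paper's. The paper does a \emph{finite} Moser iteration: it picks $k$ with $s\gamma^{k-1}\le q\le s\gamma^k$, sets $\rho_0=q/\gamma^k\le s$, and climbs from $\rho_0$ to $q$ in $k$ steps, each using an $\varepsilon_j\in(0,p-1)$ with $p-1-\varepsilon_j=\rho_0\gamma^j$ (the largest such exponent is $q/\gamma<p-1$ by hypothesis, so every step is admissible), and finally lowers $\rho_0$ to $s$ by H\"older. You instead apply the Sobolev step \emph{once}, from $\sigma=q/\gamma$ to $q$, and then patch up the remaining exponents. Your stated reason for abandoning iteration --- that ``$\sigma\gamma>p-1$, outside the admissible range of Lemma~\ref{superCacc}'' --- rests on a misreading: one does not iterate upward from $\sigma$; one starts from a \emph{smaller} seed $\rho_0\le s$ so that every intermediate exponent $\rho_0\gamma^j$ stays below $p-1$. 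So the finite iteration is available; the paper uses it and thereby avoids any real-analysis black box.

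The gap in your proposal is the final step for $0<s<\sigma$. You invoke \cite[Theorem~3.38]{HeiKiMar}, the same reference the paper cites at the end of Lemma~\ref{weakHsuper1}. But that result (and the paper's own use of it, here and in Lemma~\ref{weakRHsub}) concerns reverse-H\"older inequalities whose left-hand side is an $\esssup$ (equivalently, an $\essinf$ of the reciprocal): from $\esssup_{U_{\alpha'}}f\le A(\alpha-\alpha')^{-\theta}\bigl(\vint_{U_\alpha}f^{q_0}\bigr)^{1/q_0}$ one may lower $q_0$. In Lemma~\ref{weakRHsuper} the left side is $\bigl(\vint_{Q_{\alpha'r}}u^q\,d\nu\bigr)^{1/q}$, not an $\esssup$, and that theorem does not apply as stated. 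A self-improvement for the $L^q$-on-the-left form does exist --- interpolate $\|u\|_{L^\sigma}\le\|u\|_{L^s}^\lambda\|u\|_{L^q}^{1-\lambda}$, apply Young, and absorb the $\|u\|_{L^q}$ term by iterating over a nested family $\alpha_j\uparrow\alpha$ --- but that is a separate argument, and it requires a preliminary check that $u\in L^q_{\mathrm{loc}}(Q_{\alpha r})$ (not automatic when $q>p$; your single-step estimate does supply it for $\alpha<1$, after which one lets $\alpha\to 1$). If you supply this absorption argument explicitly, or correctly track down a version of the self-improvement lemma that covers $L^q$ left-hand sides, the proof closes; as written, the citation does not carry the weight you place on it. The minor step ``for $\sigma\le s<q$, H\"older gives it immediately'' also requires inflating $\theta$ and $C$ by factors depending only on $\gamma$, but that is harmless.
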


\begin{proof} Assume $0<s<q<(p-1)(2-p/\kappa)$, where $\kappa$ is as
  in the Sobolev--Poincaré inequality. Then there exists a $k$ such
  that $s\gamma^{k-1}\leq q\leq s \gamma^k$. Let $\rho_0$ be such that
  $0<\rho_0\leq s$ and $q=\gamma^k\rho_0$. Now for each $j=0,...,k-1$,
  there exists a $0<\varepsilon_j<p-1$ such that
  $p-1-\varepsilon_j=\rho_0\gamma^j$. By the first part of the proof
  of the previous lemma, we have
\begin{equation*}
\begin{split}
  &\left(\vint_{Q_{j+1}}  u^{\rho_0\gamma^{j+1}}\, d\nu
  \right)^{1/\rho_0\gamma^{j+1}}
  \\
  & \leq (C2p^p
  \gamma^{pj})^{1/\rho_0\gamma^{j+1}}\left(\frac{\gamma^{pj}}{(\alpha-\alpha')^p}\right)^{1/\rho_0\gamma^j}\left(\vint_{Q_j}u^{\rho_0\gamma^j}\,
  d\nu \right)^{1/\rho_0\gamma^j},
\end{split}
\end{equation*}
where $C=C(C_\mu, C_p,\Lambda,p,q,\delta,T)$. Iterating this estimate for $j=0,\dots,k-1$ yields
\begin{equation}\label{finite iteration}
\begin{split}
  &\left(\vint_{Q_{\alpha'Q}}  u^{q}\, d\nu
  \right)^{1/q} \leq \left(\frac{C}{(\alpha-\alpha')^{\gamma^*}}\right)^{1/\rho_0}\left(\vint_{\alpha Q}u^{\rho_0}\,
  d\nu \right)^{1/\rho_0},
\end{split}
\end{equation}
where $C=C(C_\mu, C_p,\Lambda,p,q,\delta,T)$ blows up as $q$ tends to $(p-1)(2-p/\kappa)$ and 
\begin{align*}
\gamma^*=\frac{p\gamma}{\gamma-1}(1-\gamma^{-k}) \leq \frac{p\gamma}{\gamma-1}.
\end{align*}
 Using Hölder's inequality on the right-hand side  of \eqref{finite iteration}, setting $\theta=p\gamma/(\gamma-1)$ and using the fact that $s/\gamma \leq \rho_0 \leq s$ completes the proof.
\end{proof}

\section{Reverse H\"older inequalities for parabolic subminimizers}

In this section we prove estimates analogous to those in
Section~\ref{sect:super}, but this time for parabolic
subminimizers. This is done essentially identically to what was done for
superminimizers, but with a slight change in the test function we use. Then we utilize the obtained energy estimate to prove a reverse H\"older
inequality for positive powers of parabolic subminimizers. 

\begin{lemma} \label{subCacc} Let $u> 0$ be a locally bounded
  parabolic subminimizer and let $\eps\geq 1$. Then
\begin{align*}
  \esssup_{0<t<T}&\int_\Om u^{p-1+\eps}\phi^p\, d\mu+ \int_{\textrm{supp}(\phi)} u^{\eps-1}g_u\phi^p\, d\nu \\
  & \leq C_1\int_{\textrm{supp}(\phi)} u^{p-1+\eps}g_\phi^p\, d\nu +
  C_2\int_{\textrm{supp}(\phi)} u^{p-1+\eps}|(\phi^{p})_t|\, d\nu
\end{align*}
for every $\phi\in \Lip_0(\Omega_T)$, $0\leq \phi\leq 1$, where 
\begin{align*}
C_1  = \left(\frac{p}{\eps}\right)^p\left(1+ \frac{\eps |p-1+\eps|}{p(p-1)}\right), \quad 
\quad C_2  =\left(1+ \frac{p(p-1)}{\eps |p-1+\eps|}\right).
\end{align*}
\end{lemma}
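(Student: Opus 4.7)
The proof is the subminimizer counterpart of Lemma~\ref{superCacc} and proceeds by exactly the same mechanism, with three adjustments. First, since $u$ is a subminimizer, the admissible test function must be \emph{nonpositive}, so we take $-\phi^{p}v^{\varepsilon}\chi_{[\tau_1,\tau_2]}^h$ rather than $\phi^{p}v^{-\varepsilon}\chi_{[\tau_1,\tau_2]}^h$. Second, the preliminary rescaling $v=\alpha u$ is performed with $\alpha>0$ chosen \emph{small} (using the assumed local upper bound on $u$), so that $\varepsilon\phi^{p}v^{\varepsilon-1}<1$ holds $\nu$-a.e.\ on $\supp(\phi)$. Third, the positive power $\varepsilon\geq 1$ replaces the negative power $-\varepsilon$; in particular $p-1+\varepsilon>0$, so in contrast to Lemma~\ref{superCacc} no case distinction based on the sign of $p-1-\varepsilon$ is required.

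\textbf{Execution.} With $\alpha$ as above, the pathwise chain rule gives, $\nu$-a.e.\ on $\supp(\phi)$, the analog of \eqref{conservation of minus}:
\[
g_{v-\phi^{p}v^{\varepsilon}}\;\leq\;(1-\varepsilon\phi^{p}v^{\varepsilon-1})\,g_v+p\phi^{p-1}v^{\varepsilon}g_\phi,
\]
and convexity of $t\mapsto t^{p}$, applied exactly as in \eqref{upper gradient on the right side}, upgrades this to
\[
g_{v-\phi^{p}v^{\varepsilon}}^{p}\;\leq\;(1-\varepsilon\phi^{p}v^{\varepsilon-1})\,g_v^{p}+p^{p}\varepsilon^{1-p}v^{p-1+\varepsilon}g_\phi^{p}.
\]
Plugging $\psi=-\phi^{p}v^{\varepsilon}\chi_{[\tau_1,\tau_2]}^h$ into the subminimizer inequality and repeating the time integration by parts of Lemma~\ref{superCacc} verbatim, with $-\varepsilon$ replaced throughout by $\varepsilon$, one obtains the identity
\[
\int v^{p-1}\bigl(\phi^{p}v^{\varepsilon}\chi_{[\tau_1,\tau_2]}^h\bigr)_{t}\,d\nu \;=\; \frac{p-1}{p-1+\varepsilon}\int v^{p-1+\varepsilon}\bigl(\phi^{p}\chi_{[\tau_1,\tau_2]}^h\bigr)_{t}\,d\nu.
\]
Letting $h\to 0$ and combining with the upper-gradient bound above, after canceling $\int_{\supp(\phi)}g_v^{p}\,d\nu$, yields an inequality whose left-hand side features both the boundary term $\bigl[\int_\Omega v^{p-1+\varepsilon}\phi^{p}\,d\mu\bigr]_{\tau_1}^{\tau_2}$ and the nonnegative integral $\varepsilon\int\phi^{p}v^{\varepsilon-1}g_v^{p}\,d\nu$.

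\textbf{Two specializations and main obstacle.} As in Lemma~\ref{superCacc}, the gradient half of the estimate is obtained by choosing $\tau_1=t_1$ and $\tau_2=t_2$, which kills the boundary term since $\phi$ is compactly supported in $\Omega_T$; the $\esssup_{t}$ half follows by choosing $\tau_1=t_1$ and $\tau_2=t$ free and discarding the nonnegative gradient integral on the left. Adding the two inequalities and undoing the rescaling by dividing through by $\alpha^{p-1+\varepsilon}$ produces the claimed inequality with the stated constants $C_1$ and $C_2$. The only nontrivial difficulty, inherited from Lemma~\ref{superCacc}, is that $v^{p-1+\varepsilon}$ has no a~priori time derivative; this is handled exactly as in Remark~\ref{timeRegularisation} by taking the genuine test function to be $-\phi^{p}\bigl((v^{p-1})_\sigma\bigr)^{\varepsilon/(p-1)}\chi_{[\tau_1,\tau_2]}^h$, performing the partial integration against the smooth mollification, and then letting $\sigma\to 0$; convergence of the upper-gradient terms is justified as in \cite{MassSilj}.
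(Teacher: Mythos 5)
Your proof is correct and follows exactly the approach the paper uses: rescale $v=\alpha u$ with $\alpha$ small (using local boundedness of $u$) so that $1-\varepsilon\phi^p v^{\varepsilon-1}\geq 0$, test with the nonpositive function $-\phi^p v^{\varepsilon}\chi^h_{[\tau_1,\tau_2]}$, apply the pathwise chain rule and convexity, integrate by parts in time, and take the two specializations of $(\tau_1,\tau_2)$; the paper itself states only that the proof is "completely analogous" to Lemma~\ref{superCacc}, and you have filled in the details faithfully. You also correctly note that the absence of a case distinction is due to $p-1+\varepsilon>0$ holding unconditionally, and your computation reproduces the stated constants $C_1$ and $C_2$.
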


\begin{proof}
  Let $0\leq \phi \leq 1$, $\phi \in \textrm{Lip}_0(\Omega_T)$, for
  some $0<t_1< t_2<T$. Let $\varepsilon>0$. Since by assumption $u$ is
  locally bounded, we can take a constant $\alpha>0$ such that after
  denoting $v=\alpha u$, we have $1-\varepsilon \phi^p
  v^{\varepsilon-1} >0$ almost everywhere in the support of
  $\phi$. Since $u$ is a subminimizer, also $v$ is a subminimizer and
  we can plug $ -\phi(x,t)^p v(x,t)^{\eps}\chi^h_{[\tau_1, \tau_2]}$
  as a test function into the inequality \eqref{minimizer}. The rest
  of the proof is now completely analogous to the proof of
  Lemma~\ref{superCacc}.
\end{proof}

We prove a reverse Hölder type inequality for positive powers of
parabolic subminimizers. Again, the proof consists of combining the
energy estimate of Lemma~\ref{subCacc} with Moser's iteration to
obtain the inequality.

\begin{lemma} \label{weakRHsub} Let $u> 0$ be a parabolic subminimizer
  in $Q_r\subset \Omega_T$ which is locally bounded and let
  $0<\delta<1$. Then there exist constants $C=C(C_\mu,C_p,\Lambda,
  p,\delta,T)$ and $\theta=\theta(C_\mu,p)$ such that the inequality
\[
\esssup_{Q_{\alpha' r}}u \leq
\left(\frac{C}{(\alpha-\alpha')^\theta}\right)^{1/q}\left(\vint_{Q_{\alpha
    r}}u^q\, d\nu \right)^{1/q}
\]  
holds for every $0<\delta\leq\alpha'<\alpha\leq 1$ and for all $0<q\leq p$.
\end{lemma}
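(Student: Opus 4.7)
The plan is to mimic the Moser iteration used for the superminimizer inequality in Lemma~\ref{weakHsuper1}, but now iterating the exponent \emph{upward} through a geometric sequence, so that in the limit we recover the essential supremum. The two ingredients are the subminimizer energy estimate (Lemma~\ref{subCacc}) and the Sobolev inequality~\eqref{Sobo}; combining them produces a recursion that multiplies the exponent by $\gamma=2-p/\kappa>1$ at each stage, and the constants form a convergent series.

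More precisely, fix $0<\delta\leq\alpha'<\alpha\leq 1$, set $\alpha_j=\alpha-(\alpha-\alpha')(1-\gamma^{-j})$ and $Q_j=Q_{\alpha_j r}=B_j\times T_j$, and choose cutoffs $\phi_j\in\Lip_0(Q_j)$, $0\le\phi_j\le 1$, with $\phi_j\equiv 1$ on $Q_{j+1}$ and
\[
g_{\phi_j}\leq \frac{4\gamma^j}{(\alpha-\alpha')r},\qquad |(\phi_j)_t|\leq \frac{1}{T}\Bigl(\frac{4\gamma^j}{(\alpha-\alpha')r}\Bigr)^p.
\]
For any $\varepsilon\geq 1$, the calculus rules give
\[
g_{u^{(p-1+\varepsilon)/p}\phi_j}^{p}\leq 2^{p-1}u^{p-1+\varepsilon}g_{\phi_j}^{p}+2^{p-1}\Bigl(\tfrac{p-1+\varepsilon}{p}\Bigr)^{p}u^{\varepsilon-1}g_u^{p}\phi_j^{p}.
\]
Splitting the left-hand side by H\"older's inequality exactly as in the proof of Lemma~\ref{weakHsuper1} yields
\[
\vint_{T_{j+1}}\!\vint_{B_{j+1}} u^{(p-1+\varepsilon)\gamma}\,d\mu\,dt \;\leq\; C\,\Bigl(\esssup_{T_j}\vint_{B_j}u^{p-1+\varepsilon}\phi_j^{p}\,d\mu\Bigr)^{(\kappa-p)/\kappa}\!\vint_{T_j}\Bigl(\vint_{B_j}(u^{(p-1+\varepsilon)/p}\phi_j)^{\kappa}d\mu\Bigr)^{p/\kappa}\!dt,
\]
with $C$ absorbing the harmless ratio $|T_j|\mu(B_j)/(|T_{j+1}|\mu(B_{j+1}))$ bounded uniformly in $j$ via the doubling property and the bound $\alpha_{j+1}\geq\min\{\delta,(1+\gamma)^{-1}\}\alpha_j$.

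Apply the Sobolev inequality~\eqref{Sobo} to the innermost integral and then invoke Lemma~\ref{subCacc} (legitimately, since $\varepsilon\geq 1$) to bound both the $\esssup$ factor and the $g_u^{p}\phi_j^{p}$ term that appears after Sobolev. Collecting constants and the gradient/time-derivative bounds on $\phi_j$ gives the clean recursion
\[
\vint_{Q_{j+1}}u^{(p-1+\varepsilon)\gamma}\,d\nu \;\leq\; C\,\bigl(1+|p-1+\varepsilon|^p\bigr)\Bigl(\frac{\gamma^{jp}}{(\alpha-\alpha')^{p}}\vint_{Q_j}u^{p-1+\varepsilon}\,d\nu\Bigr)^{\gamma},
\]
where $C=C(C_\mu,C_p,\Lambda,p,\delta,T)$. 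Now specialize $\varepsilon=\varepsilon_j$ by requiring $p-1+\varepsilon_j=p\gamma^j$; since $\gamma>1$, every $\varepsilon_j\geq 1$ as needed. Raising to the power $1/p\gamma^{j+1}$ produces
\[
\Bigl(\vint_{Q_{j+1}}u^{p\gamma^{j+1}}\,d\nu\Bigr)^{1/p\gamma^{j+1}} \leq \bigl(C\gamma^{pj}\bigr)^{1/p\gamma^{j+1}}\Bigl(\frac{\gamma^{pj}}{(\alpha-\alpha')^{p}}\Bigr)^{1/p\gamma^{j}}\!\Bigl(\vint_{Q_{j}}u^{p\gamma^{j}}\,d\nu\Bigr)^{1/p\gamma^{j}}.
\]

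Iterating from $j=0$ and using convergence of $\sum_{j\ge 0}\gamma^{-j}$ and $\sum_{j\ge 0}j\gamma^{-j}$ yields
\[
\esssup_{Q_{\alpha' r}}u \;\leq\; \Bigl(\frac{C}{(\alpha-\alpha')^{\theta}}\Bigr)^{1/p}\Bigl(\vint_{Q_{\alpha r}}u^{p}\,d\nu\Bigr)^{1/p},\qquad \theta=\frac{p\gamma}{\gamma-1},
\]
which is the desired conclusion in the case $q=p$. To cover all $0<q\leq p$, one runs the standard self-improvement argument from real analysis: apply the above inequality on an intermediate radius $\alpha''\in(\alpha',\alpha)$, use the trivial bound $u^{p}=u^{q}u^{p-q}\leq u^{q}(\esssup_{Q_{\alpha''r}}u)^{p-q}$, and absorb the resulting essential supremum term on the left with Young's inequality (cf.\ \cite[Theorem~3.38]{HeiKiMar}); this yields the claim with $\theta$ replaced by $\theta/q$ up to a $q$-dependent constant.

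The main obstacle is purely bookkeeping: one must verify that the constant $C(1+|p-1+\varepsilon_j|^{p})=C(1+p^p\gamma^{jp})$ picks up only a factor that sums in the iteration (it enters as $\gamma^{jp/\gamma^{j+1}}$, whose exponent forms a convergent series), and that the term $\varepsilon_j-1\ge 0$ in the energy estimate is precisely where the subminimizer sign convention in Lemma~\ref{subCacc} is needed; no ``bad'' exponent $\varepsilon=p-1$ (as in the superminimizer case) is encountered, so the iteration proceeds uniformly from $j=0$ without any neighborhood exclusion.
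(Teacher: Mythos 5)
Your proposal is correct and takes essentially the same route as the paper: the paper's own proof of Lemma~\ref{weakRHsub} is just the remark that the argument is analogous to Lemma~\ref{weakHsuper1}, using Lemma~\ref{subCacc} in place of Lemma~\ref{superCacc} and the substitution $p-1+\varepsilon_j=p\gamma^j$ (so $\varepsilon_j\geq 1$), and you have simply carried those steps out in detail, including the final $q\leq p$ reduction via \cite[Theorem~3.38]{HeiKiMar}.
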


\begin{proof}
  The steps of the proof are analogous to the proof of Lemma
  \ref{weakHsuper1}. The difference is that here we use
  Lemma~\ref{subCacc} and the observation that for each $\gamma^j$,
  $j=0,1,\dots$ there exists a $\varepsilon_j\geq 1$ such that
  $p-1+\varepsilon_j= p\gamma^j$.
%
\end{proof}

\section{Measure estimates for parabolic superminimizers}

The following logarithmic energy estimate will also be important to
our argument. Regarding the time derivation of $u^{p-1}$, the proof
presented below is again formal. Justifications for this can be given
as in Remark~\ref{timeRegularisation}; we use the test function as in
\eqref{testfunctionSupCacc}, but with $\varepsilon=p-1$.

\begin{lemma} \label{prelogsuperCacc} Let $u> 0$ be a parabolic
  superminimizer, locally bounded away from zero. Then the inequality
\begin{align*}
  \int_{\tau_1}^{\tau_2} \int_\Om g_{\log u}^p\phi^p\, d\mu dt -
  p\left[\int_{\Om \times \{t\}}\log u \phi^p\, d\mu
  \right]_{t=\tau_1}^{\tau_2}
  \\
  \leq\frac{p^p}{(p-1)^p}\int_{\tau_1}^{\tau_2} \int_\Om g_\phi^p\, d\mu dt +
  p\int_{\tau_1}^{\tau_2} \int_\Om|\log u||(\phi^p)_t|\, d\mu
  dt
\end{align*}
holds for every $\phi\in \textrm{Lip}_0(\Om_T)$, such that $0\leq
\phi\leq 1$ and almost every $0<\tau_1<\tau_2<T$.
\end{lemma}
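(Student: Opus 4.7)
The plan is to mirror the proof of Lemma~\ref{superCacc} but with the critical exponent $\varepsilon = p-1$, which is precisely the value excluded there and which, after integration by parts in time, produces a logarithm instead of a power of $u$. All manipulations involving $\partial_t u$ are formal and are justified by the mollification scheme of Remark~\ref{timeRegularisation}, used with the test function $\psi = \phi^p ((v^{p-1})_\sigma)^{-1}\chi^h_{[\tau_1,\tau_2]}$; in the final step we take $\sigma\to 0$ and then $h\to 0$. As in the proof of Lemma~\ref{superCacc}, we first rescale $u$ by a positive constant $\alpha$, setting $v=\alpha u$, so that $1-(p-1)\phi^p v^{-p}\geq 0$ $\nu$-almost everywhere on the support of $\phi$; this rescaling does not affect $g_{\log u}$ or $g_\phi$, and only changes $\log u$ by an additive constant whose contribution cancels from both sides of the inequality.

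With the admissible test function $\psi=\phi^p v^{-(p-1)}\chi^h_{[\tau_1,\tau_2]}$, one computes formally
\[
v^{p-1}\psi_t = (\phi^p\chi^h)_t - (p-1)\phi^p\chi^h\,\partial_t(\log v),
\]
and, repeating the convexity argument of Lemma~\ref{superCacc} with $\varepsilon=p-1$, one obtains
\[
g_{v+\psi}^p \leq \bigl(1-(p-1)\phi^p v^{-p}\chi^h\bigr)g_v^p + \frac{p^p}{(p-1)^{p-1}}\chi^h g_\phi^p.
\]
Substituting into the superminimizer inequality and cancelling $g_v^p$ yields
\[
(p-1)\int\chi^h\phi^p v^{-p}g_v^p\,d\nu \leq -p\int v^{p-1}\psi_t\,d\nu + \frac{p^p}{(p-1)^{p-1}}\int \chi^h g_\phi^p\,d\nu.
\]
Since $g_{\log v}^p = g_v^p/v^p$ (by the Lipschitz chain rule on $p$-almost every path, using that $v$ is bounded away from zero), the left-hand side equals $(p-1)\int\chi^h\phi^p g_{\log v}^p\,d\nu$.

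The key step is now the time term. Integration by parts in $t$ (legitimate after mollification, since $\chi^h$ vanishes at the time endpoints) gives
\[
-p\int v^{p-1}\psi_t\,d\nu = -p\int(\phi^p\chi^h)_t\,d\nu - p(p-1)\int(\phi^p\chi^h)_t\log v\,d\nu.
\]
I then let $h\to 0$ with $\chi^h\to \chi_{[\tau_1,\tau_2]}$, splitting $(\phi^p\chi^h)_t = (\phi^p)_t\chi^h + \phi^p(\chi^h)_t$. A direct computation using the Lebesgue differentiation theorem shows that the four resulting contributions from the first integral cancel pairwise to $0$, while the second integral produces
\[
-p(p-1)\int_{\tau_1}^{\tau_2}\!\!\int_\Om(\phi^p)_t\log v\,d\mu dt + p(p-1)\Bigl[\int_{\Om\times\{t\}}\phi^p\log v\,d\mu\Bigr]_{t=\tau_1}^{\tau_2}.
\]
Combining, dividing by $(p-1)$, and estimating $-(\phi^p)_t\log v\leq|(\phi^p)_t||\log v|$ yields the claim, with the additive constant $\log\alpha$ contributing identically on both sides and hence cancelling.

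The main obstacle is the rigorous handling of $\partial_t(\log u)$, which does not exist in the metric setting. This is dealt with exactly as in Remark~\ref{timeRegularisation}: one works with $(v^{p-1})_\sigma$ in place of $v^{p-1}$ in the test function, performs the integration by parts in $t$ on the mollified quantity, and then passes $\sigma\to 0$ using the convergence of the upper gradient terms established in \cite{MassSilj}. A secondary technical point is the convergence of the boundary-in-time contributions as $h\to 0$, which is routine from Lebesgue differentiation and the local boundedness of $\log u$.
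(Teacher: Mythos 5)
Your proof is correct and follows essentially the same route as the paper: test with $\phi^p u^{1-p}\chi^h_{[\tau_1,\tau_2]}$, use convexity of $t\mapsto t^p$ to bound $g_{u+\psi}^p$, integrate by parts in time, and let $h\to 0$. One small ordering point: the additive constant $\log\alpha$ cancels only in the intermediate inequality that still has $-(\phi^p)_t\log v$ on the right (which is invariant under $\log v\mapsto\log v+c$ since $\int_{\tau_1}^{\tau_2}\int(\phi^p)_t\,d\mu\,dt=\bigl[\int\phi^p\,d\mu\bigr]_{\tau_1}^{\tau_2}$), so you should unscale \emph{before} passing to $|(\phi^p)_t|\,|\log u|$, not after.
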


\begin{proof}
  Let $\phi \in \Lip_0(\Om_{T})$ be such that $0\leq \phi \leq 1$. As
  in the preceding lemma, since both the definition of a parabolic
  superminimizer and the final weak Harnack inequality are scalable
  properties, we may assume that $u$ has been scaled in such a way
  that $1-(p-1) \phi^p u^{-p} >0$ almost everywhere in the support of
  $\phi$, and by using the convexity of the mapping $t\mapsto t^p$, we
  find
\begin{align*}
  g_{u+\phi^pu^{1-p}}^p &\leq \left( (1-(p-1)\phi^p u^{-p})g_u+p\phi^{p}(p-1) u^{-p}\frac{u}{\phi(p-1)}g_{\phi} \right)^p \\
  & \leq (1-(p-1)\phi^pu^{-p})g_u^p +p^p(p-1)^{1-p}g_\phi^p.
\end{align*}
Let $\chi_{[\tau_1, \tau_2]}^h$ be defined as in Lemma
\ref{superCacc}. Integrating by parts, we obtain
\begin{align*}
  &\int_{\tau_1}^{\tau_2}\int_\Omega u^{p-1}(\phi^p u^{1-p}\chi_{[\tau_1, \tau_2]}^h)_t\, d\mu\,dt  \\
  &=(p-1)\int_{\tau_1}^{\tau_2}\int_\Omega [(\log u) \phi^p(\chi^h_{[\tau_1,\tau_2]})_t +(\log u )(\phi^p)_t
  \chi^h_{[\tau_1,\tau_2]}]\,d\mu\,dt.
\end{align*}
Taking the limit $h\rightarrow 0$, we obtain by Lebesgue's theorem of
differentiation
\begin{align*}
&\lim_{h\rightarrow 0} \int_{\tau_1}^{\tau_2}\int_\Omega u^{p-1}(\phi^p u^{1-p}\chi_{[\tau_1, \tau_2]}^h)_t\, d\mu\,dt \\
= &(p-1)\left(-\left[\int_{\Omega \times \{t\}} (\log u )\phi^p \, d\mu\right]_{t=\tau_1}^{\tau_2} + \int_{\tau_1}^{\tau_2}\int_\Omega(\log u) (\phi^p)_t \,d\mu\,dt\right).
\end{align*}
As $u$ is a parabolic superminimizer and
$\phi^pu^{1-p}\chi^h_{[\tau_1, \tau_2]}$ is a nonnegative admissible
test-function, we obtain
\begin{align*}
  &p(p-1)\left(-\left[\int_{\Omega \times \{t\}} \log u \phi^p \, d\mu\right]_{t=\tau_1}^{\tau_2} +\int_{\tau_1}^{\tau_2}\int_\Omega(\log u) (\phi^p)_t \,d\mu\,dt\right) \\
  &\leq \lim_{h\rightarrow 0} \left( -\int_{\textrm{supp}(\phi^p\chi^h_{[\tau_1,
  \tau_2]})} g_u^p \, d\nu + \int_{\textrm{supp}(\phi^p\chi^h_{[\tau_1,
  \tau_2]})} g_{u+\phi^pu^{1-p}\chi^h_{[\tau_1,
  \tau_2]}}^p \, d\nu\right)\\
  &\leq -(p-1)\int_{\tau_1}^{\tau_2}\int_\Omega \phi^pg_{\log u}^p \,d\mu\,dt +p^p(p-1)^{1-p}\int_{\tau_1}^{\tau_2}\int_\Omega
  g_\phi^p\, d\mu\,dt.
\end{align*}
Rearranging terms completes the proof.
\end{proof}

Next, using the logarithmic energy estimate, we establish monotonicity
in time of the weighted integral of $\log u$. This in turn enables us
to estimate the measure of the level sets of $\log u$ around a time
level $t_0$.

\begin{lemma} \label{measure estimate} Let $u>0$ be a parabolic
  superminimizer in $Q_r\subset \Omega_T$ and assume $u$ is locally
  bounded away from zero. Let $0<\alpha<1$. Define
\[
\phi(x) = \left(1-2\frac{d(x,x_0)}{(1+\alpha) r}\right)_+,
\]
where $0<\alpha<1$ and $(x,t)\in Q_{ r}$. Let
\[
\beta = \frac1{N}\int_{B(x_0,r)}\log u(x,t_0)\phi^p(x)\, d\mu ,
\]
where 
\[
N=\int_{B(x_0,r)}\phi^p(x)\, d\mu .
\]
Then there exist positive constants $C=C(C_\mu, C_p, p,\alpha)$ and
$C'=C'( C_\mu, p,\alpha)$ such that
\[
\nu\left(\{(x,t)\in Q_{\alpha r}:\; t\leq t_0,\,\log u(x,t) > \lambda
+\beta+C'\}\right) \leq C\frac{\nu(Q_{\alpha r})}{\lambda^{p-1}}
\]
and
\[
\nu\left(\{(x,t)\in Q_{\alpha r}:\; t\geq t_0,\, \log u(x,t) < -\lambda
+\beta-C'\}\right) \leq C\frac{\nu(Q_{\alpha r})}{\lambda^{p-1}}
\]
for every $\lambda>0$.
\end{lemma}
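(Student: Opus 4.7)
The plan is a Moser-type argument combining the logarithmic energy estimate of Lemma~\ref{prelogsuperCacc} with the weighted Poincar\'e inequality of Lemma~\ref{weighted Poincare}. First, plug the purely spatial $\phi(x)$ of the statement into Lemma~\ref{prelogsuperCacc}, regarded as time-independent so that $(\phi^p)_t\equiv 0$. Since $g_\phi\leq 2/((1+\alpha)r)$ with $\supp(g_\phi)\subset B(x_0,(1+\alpha)r/2)$, the doubling property yields, for almost every $0<\tau_1<\tau_2<T$,
\begin{equation*}
\int_{\tau_1}^{\tau_2}\!\!\int g_{\log u}^p\,\phi^p\,d\mu\,dt + pN\bigl(V(\tau_1)-V(\tau_2)\bigr) \leq \frac{C\,\mu(B(x_0,r))\,(\tau_2-\tau_1)}{r^p},
\end{equation*}
with $V(t):=N^{-1}\int\log u(x,t)\phi^p(x)\,d\mu$, $N:=\int\phi^p\,d\mu$, and $C=C(p,\alpha,C_\mu)$. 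Dropping the nonnegative gradient term gives the almost-monotonicity $V(\tau_1)-V(\tau_2)\leq C_3(\tau_2-\tau_1)/r^p$ for $\tau_1<\tau_2$, i.e.\ $t\mapsto V(t)+C_3 t/r^p$ is nondecreasing. Setting $C':=C_3 T$ and using $|t-t_0|\leq T(\alpha r)^p\leq Tr^p$ on $Q_{\alpha r}$, this yields $V(t)\leq\beta+C'$ for $t\leq t_0$ and $V(t)\geq\beta-C'$ for $t\geq t_0$.

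\textbf{Reduction.} On the first exceptional set one has $\log u(x,t)-V(t)\geq\log u(x,t)-\beta-C'>\lambda$; on the second, $V(t)-\log u(x,t)>\lambda$. Writing $Q^{\pm}$ for the time-half of $Q_{\alpha r}$ relative to $t_0$, both claims thus reduce to bounding
\begin{equation*}
\nu\bigl(\{(x,t)\in Q^{\pm}:\,|\log u(x,t)-V(t)|>\lambda\}\bigr) \leq C\,\nu(Q_{\alpha r})/\lambda^{p-1}.
\end{equation*}

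\textbf{Deviation estimate.} By Lemma~\ref{weighted Poincare} with $\theta=p$ and outer radius $(1+\alpha)r/2$ (so the weight equals $\phi^p$), at almost every $t$,
\begin{equation*}
\int|\log u(\cdot,t)-V(t)|^p\,\phi^p\,d\mu\leq C\,r^p\int g_{\log u}^p\,\phi^p\,d\mu.
\end{equation*}
H\"older in the spatial variable with exponents $p/(p-1)$ and $p$ converts this into a weighted $L^{p-1}$-estimate on $\log u-V(t)$, and an additional H\"older in $t$ yields
\begin{equation*}
\int_{Q^{\pm}}|\log u-V(t)|^{p-1}\phi^p\,d\nu \leq C\,r^{p-1}N^{1/p}(\tau_2-\tau_1)^{1/p}\Bigl(\int_{Q^{\pm}} g_{\log u}^p\,\phi^p\,d\nu\Bigr)^{(p-1)/p}.
\end{equation*}
The gradient integral on the right is then bounded via the log energy inequality applied on the endpoints of $Q^{\pm}$; although the ``jump'' $pN(V(\tau_2)-V(\tau_1))$ has only one-sided control from the first step, it is absorbed by splitting $Q^{\pm}$ into sub-intervals aligned with the monotonicity of $V$. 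This bounds the whole expression by $C\,\nu(Q_{\alpha r})$. Chebyshev's inequality at exponent $p-1$, combined with the lower bound $\phi\geq(1-\alpha)/(1+\alpha)$ on $B(x_0,\alpha r)$, then completes the proof.

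\textbf{Main obstacle.} The subtlest point is the jump term $pN(V(\tau_2)-V(\tau_1))$ on the right-hand side of the log energy: the almost-monotonicity controls it only on one side. Working at the $L^{p-1}$-level via H\"older (rather than a direct $L^p$-Chebyshev), together with a time-subdivision of $Q^{\pm}$ into slices where the jump is manageable, is precisely what allows the one-sided control to suffice and accounts for the exponent $p-1$ in the conclusion.
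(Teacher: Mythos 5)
Your first step (plugging the purely spatial $\phi$ into Lemma~\ref{prelogsuperCacc}, combining with the weighted Poincar\'e inequality, and extracting the almost-monotonicity of $t\mapsto V(t)+C_3t/r^p$) is exactly what the paper does and is correct. The gap is in the ``Deviation estimate'' and the proposed workaround in ``Main obstacle'': the plan to bound $\int_{Q^\pm} g_{\log u}^p\phi^p\,d\nu$ by $C\,\nu(Q_{\alpha r})$ cannot work. The log energy estimate gives
\begin{equation*}
\int_{\tau_1}^{\tau_2}\!\!\int g_{\log u}^p\,\phi^p\,d\mu\,dt \;\leq\; pN\bigl(V(\tau_2)-V(\tau_1)\bigr) + C\,\mu(B(x_0,r))\,\frac{\tau_2-\tau_1}{r^p},
\end{equation*}
and the almost-monotonicity only bounds $V(\tau_1)-V(\tau_2)$ from above, i.e.\ controls the \emph{wrong sign} of the jump term. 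The increase $V(\tau_2)-V(\tau_1)$ over $Q^+$ has no a priori bound (think of $u$ growing rapidly in $t$); subdividing $Q^\pm$ into sub-intervals only partitions an unbounded quantity, it does not tame it. Consequently the right-hand side of your H\"older-in-time estimate is not bounded by $C\nu(Q_{\alpha r})$, and the final Chebyshev step does not go through.

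The mechanism the paper uses to get around exactly this obstacle is genuinely nonlinear and does not reduce to a H\"older/Chebyshev estimate. After the drift correction it sets $w=v+C'(t-t_0)/(\alpha r)^p$ and $W(t)=V(t)+C'(t-t_0)/(\alpha r)^p$, obtaining the \emph{differential} inequality
\begin{equation*}
\frac{(1-\alpha)^p}{CNr^p}\int_{B(x_0,\alpha r)}|w-W(t)|^p\,d\mu \;\leq\; W'(t),
\end{equation*}
with $W$ nondecreasing and $W(t_0)=0$. The crucial step is to compare the left-hand side with $(\lambda-W(t))^p\mu(E_\lambda^-(t))$ (valid because $W(t)\leq 0$ for $t\leq t_0$), and then divide by $(\lambda-W(t))^p$ before integrating in time:
\begin{equation*}
\frac{(1-\alpha)^p}{CNr^p}\,\mu(E_\lambda^-(t)) \;\leq\; \frac{W'(t)}{(\lambda-W(t))^p}.
\end{equation*}
The right-hand side is an exact derivative, so its time integral telescopes to $\bigl[(\lambda-W(t))^{-(p-1)}/(p-1)\bigr]$ which is $\leq \lambda^{-(p-1)}/(p-1)$ \emph{no matter how large the total variation of $W$ is}. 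This is where the exponent $p-1$ comes from, and this nonlinear weighting of the level-set measure by $(\lambda-W(t))^{-p}$ is precisely the idea your proposal is missing: replacing it with H\"older at the $L^{p-1}$ level plus a time-partition does not reproduce the telescoping cancellation, so the bound does not close.
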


\begin{proof}
  From the definition of $\phi$, it readily follows that $0\leq
  \phi\leq 1$, $g_\phi \leq (\alpha r)^{-1}$, and for every $t\in
  [t_0-T(\alpha r)^p, t_0+T(\alpha r)^p]$,
\begin{equation}\label{estimateforN}
\begin{split}
  &\left(\frac{1-\alpha}{2}\right)^p\mu(B(x_0,\alpha r)) \leq N \leq
  \mu(B(x_0,r)).
\end{split}
\end{equation}
We write
\[
v(x,t) = \log u(x,t)-\beta \qquad \textrm{and} \qquad V(t) =
\frac1{N}\int_{B(x_0,r)}v(x,t)\phi^p(x)\, d\mu ,
\]
and find that $V(t_0)=0$.  Let $0\leq \xi(t)\leq 1$ be a smooth
function such that supp$(\xi) \subset (t_0-Tr^p,t_0+Tr^p)$, and
$\xi(t)=1$ for all $t\in [t_0-T(\alpha r)^p, t_0+T(\alpha
r)^p]$. 

Write $\psi(x,t)=\phi(x)\xi(t)$.  Since $u$ is a positive
superminimizer bounded away from zero, we can use
Lemma~\ref{prelogsuperCacc} with $\psi$ as a test function. We obtain
for $t_0-T(\alpha r)^p< t_1<t_2 < t_0+T(\alpha r)^p$, since on this
interval $\xi(t)=1$,
\begin{align*}
\int_{t_1}^{t_2}\int_{B(x_0,r)}g_v^p\phi^p \, d\mu dt -p\left[NV(t)\right]_{t=t_1}^{t_2} &\leq  \frac{p^p}{(p-1)^p}\int_{t_1}^{t_2}\int_{B(x_0,r)}g_\phi^p\, d\mu dt \\
& \leq \frac{Cp^p}{(p-1)^p}(t_2-t_1)\frac{\mu(B(x_0,r))}{(\alpha r)^p},
\end{align*}
where $C=C(p)$. On the other hand, from the weighted Poincar\'e
inequality \eqref{wPI}, we have
\begin{align*}
 & \left(\frac{1-\alpha}{2}\right)^p\int_{t_1}^{t_2}\int_{B(x_0,\alpha r)}|v-V(t)|^p\, d\mu dt\\
  &\leq \int_{t_1}^{t_2}\int_{B(x_0,\alpha r)}|v-V(t)|^p\phi^p\, d\mu dt\leq Cr^p\int_{t_1}^{t_2}\int_{B(x_0,r)}g_v^p\phi^p\, d\mu dt,
\end{align*}
where $C=C(C_\mu,C_p,p,\alpha)$. By combining these we find
\begin{align*}
 & \frac{(1-\alpha)^p}{CNr^p}\int_{t_1}^{t_2}\int_{B(x_0,\alpha r)}  |v-V(t)|^p\, d\mu dt +V(t_1)-V(t_2) \\
  & \leq \frac{C(t_2-t_1)\mu(B(x_0,r))}{N(\alpha r)^p}
   \leq \left( \frac{2}{1-\alpha} \right)^p\frac{C(t_2-t_1)\mu(B(x_0,r))}{\mu(B(x_0,\alpha r))(\alpha r)^p}\\
  & \leq C' \frac{(t_2-t_1)}{(\alpha r)^p},
\end{align*}
where $C'=C'(C_\mu,C_p, p,\alpha)$. We denote
\[
w(x,t)=v(x,t)+\frac{C'(t-t_0)}{(\alpha r)^p} 
\]
and
\[
W(t)=V(t)+\frac{C'(t-t_0)}{(\alpha r)^p},
\]
and restate the preceding inequality as
\begin{equation*}
  \frac{(1-\alpha)^p}{CNr^p}\int_{t_1}^{t_2}\int_{B(x_0,\alpha r)}  |w-W(t)|^p\, d\mu dt +W(t_1)-W(t_2) \leq 0.
\end{equation*}
This implies that $W(t_1)\leq W(t_2)$ whenever $t_0-T(\alpha r)^p\leq
t_1<t_2 \leq t_0+T(\alpha r)^p$, i.e., the function $W$ is increasing,
thus differentiable for almost every $t \in (t_0-T(\alpha r)^p,
t_0+T(\alpha r)^p)$. As a consequence, we obtain
\begin{align}\label{derivestimate}
\frac{(1-\alpha)^p}{CNr^p}\int_{B(x_0,\alpha r)}  |w-W(t)|^p\, d\mu  -W'(t) \leq 0
\end{align}
for almost every $t_0-T(\alpha r)^p<t<t_0+T(\alpha
r)^p$. Let us denote
\begin{align*}
E_\lambda(t)&=\{x\in B(x_0,\alpha r):\; w(x,t)>\lambda\},\\
E_\lambda^-&= \{(x,t)\in Q_{\alpha r}:\; t< t_0,\, w(x,t)>\lambda\}.
\end{align*}
For every $t_0-T(\alpha r)^p< t< t_0$ and $\lambda>0$, since $W(t)\leq W(t_0)=0$, we have
\begin{align*}
(\lambda - W(t))^p \mu(E_\lambda^-(t)) \leq  \int_{B(x_0,\alpha r)} |w-W(t)|^p\, d\mu.
\end{align*} 
Hence we have
\begin{align*}
\frac{(1-\alpha)^p}{CNr^p} \mu(E_\lambda^-(t))-\frac{W'(t)}{(\lambda-W(t))^p}\leq 0
\end{align*}
for almost every $t_0- T(\alpha r)^p< t < t_0$. This yields, after
integrating over the interval $(t_0-T(\alpha r)^p, t_0)$,
\begin{align*}
\frac{\nu(E_\lambda^-)}{Nr^p} \leq \frac{C}{(1-\alpha)^{p}}  [(\lambda-W(t))^{-(p-1)} ]_{t=t_0-T(\alpha r)^p}^{t_0} \leq \frac{C}{(1-\alpha)^p\lambda^{p-1}},
\end{align*}
where $C=C(C_\mu,C_p,p,\alpha)$. Together with \eqref{estimateforN}, this implies
\begin{align*}
\nu\left(\{(x,t)\in Q_{\alpha r}:\; t\leq t_0,\, \log u(x,t) > \lambda
+\beta+C'\}\right) \leq C \frac{\nu(Q_{\alpha r})}{\lambda^{p-1}},
\end{align*}
where $C=C(C_\mu,C_p,p,\alpha)$. Denote then
\begin{align*}
E_\lambda^+(t) &= \{x\in B(x_0,\alpha r):\; w(x,t)<-\lambda\},\\
E_\lambda^+&= \{(x,t)\in Q_{\alpha r}:\; t> t_0,\, w(x,t)<-\lambda\}.
\end{align*}
Similarly to the case of $E_\lambda^-$, using the monotonicity of $W(t)$, we obtain
\begin{align*}
  (\lambda + W(t))^p \mu(E_\lambda^+(t)) \leq \int_{B(x_0,\alpha r)}
  |w-W(t)|^p\, d\mu
\end{align*} 
for every $t_0<t<t_0+T(\alpha r)^p$. This together with
\eqref{derivestimate} leads to
\begin{align*}
\frac{(1-\alpha)^p\mu(E_\lambda^+(t))}{CNr^p}-\frac{W'(t)}{(\lambda+W(t))^p}\leq 0
\end{align*}
for almost every $t_0 < t < t_0+ T(\alpha r)^p$. Integration over the
interval $(t_0, t_0+T(\alpha r)^p)$ gives now
\begin{align*}
\frac{\nu( E_\lambda^+)}{ N r^p} \leq - \frac{C}{(1-\alpha)^{p}}[ (\lambda+ W(t))^{-(p-1)}]_{t=t_0}^{t_0+T(\alpha r)^p} \leq \frac{C}{(1-\alpha)^{p} \lambda^{p-1}},
\end{align*}
and thus after using \eqref{estimateforN} we may conclude
\begin{align*}
\nu(\{(x,t)\in Q_{\alpha r}:\; t\geq t_0,\, \log u<-\lambda+\beta-C'\}) \leq C \frac{\nu( Q_{\alpha r})}{ \lambda^{p-1}}.
\end{align*}
Again $C=C(C_\mu, C_p, p,\alpha)$.
\end{proof}

\section{Harnack's inequality for parabolic minimizers}

Having established a logarithmic measure estimate for superminimizers around a
time level $t_0$, we have the prerequisites to use
Lemma~\ref{abstractlemma}. This way for parabolic superminimizers we can glue the reverse Hölder inequality  for negative powers together with the reverse Hölder inequality for positive powers. We obtain a weak form of the Harnack inequality for parabolic superminimizers locally bounded away from zero. This result is in some sense finer than the final Harnack inequality since we only assume the superminimizing property, and hence it is of interest in itself.  Observe in the following how, from applying
Lemma~\ref{abstractlemma} separately on both sides of the time level
$t_0$, a waiting time inevitably appears between the negative and
positive time segments.

\begin{lemma} \label{weakHsuper2} Let $u>0$ be a parabolic
  superminimizer in $Q_r\subset\Om_T$ which is bounded away from
  zero. Then
\[
\left(\vint_{\delta Q^-} u^q\, d\nu \right)^{1/q} \leq C\essinf_{\delta Q^+}u,
\]
where $0<\delta<1$ and $0<q<(p-1)(2-p/\kappa)$. Here $C=C(C_\mu, C_p,
\Lambda, p,q,\delta,T)$.
\begin{proof}
  Assume $0<\delta< 1$. Let $\beta$ and $C'$ be as in
  Lemma~\ref{measure estimate}. By Lemma~\ref{weakHsuper1} there
  exists a positive constant $C=C(C_\mu,C_p,\Lambda,p,\delta,T)$, such
  that for every $0<s\leq p$ and $0< \delta \leq\alpha'< \alpha <1$,
  we have
  \begin{equation}\label{first condition}
  \begin{split}
    (\esssup_{\alpha' Q^+} u^{-1}e^{\beta-C'})^{-1}&=\essinf_{\alpha' Q^+} u
    e^{-\beta+C'}\\&\geq
    C\left(\frac1{(\alpha-\alpha')^\theta}\vint_{\alpha
      Q^+}(ue^{-\beta+C'})^{-s}\, d\nu \right)^{-1/s}.
  \end{split}
  \end{equation}
  By Lemma~\ref{measure estimate} applied to
  $\{Q_{(3+\delta)r/4}:\; t\geq t_0\}$, we have
\begin{equation}\label{second condition}
  \begin{split}
    & \nu\left(\{(x,t)\in \frac{1+\delta}{2} Q^+:\; \log (u^{-1}e^{\beta-C'}) > \lambda \}\right) \\
    &\leq \nu\left(\{(x,t)\in Q_{(3+\delta)r/4}:\; t\geq t_0,\, \log (u^{-1}e^{\beta-C'}) > \lambda \}\right) \\
    &\leq C\frac{\nu(Q_{(3+\delta)r/4})}{\lambda^{p-1}}\leq
    C\frac{\nu(\delta Q^+)}{\lambda^{p-1}}
\end{split}
\end{equation}
for every $\lambda>0$. In the last step of the above inequality, we
used the doubling property of $\mu$, and so $C=C(C_\mu,
C_p,p,\delta)$. From \eqref{first condition} and \eqref{second
  condition}, we now see that the conditions of Lemma
\ref{abstractlemma}, with $(1+\delta)/2Q^+$ in place of $U_1$, are
met. Hence
\begin{align}\label{supestimate}
 \esssup_{\delta Q^+} u^{-1}e^{\beta-C'} \leq C,
\end{align}
where $C=C(C_\mu, C_p,\Lambda,p,\delta,T)$. 
From Lemma~\ref{weakRHsuper} we know there exists a positive constant
$C = C(C_\mu,C_p, \Lambda,p,q,\delta,T)$ for which
 \[
 \left(\vint_{\alpha' Q^-} (u
  e^{-\beta-C'})^q \, d\nu \right)^{1/q} \leq
 \left(\frac{C}{(\alpha-\alpha')^\theta}\right)^{1/s}\left(\vint_{\alpha
     Q^-}(u
  e^{-\beta-C'})^s\, d\nu \right)^{1/s}
\]
for every $0\leq\delta <\alpha'<\alpha\leq 1$ and for all
$0<s<q<(p-1)(2-p/\kappa)$. 
Moreover for $\delta Q^-$, since $u$ is a positive superminimizer
bounded away from zero, we can use Lemma \ref{measure estimate} to get
\[
\nu\left(\{(x,t)\in \frac{1+\delta}{2} Q^-:\; \log (u
  e^{-\beta-C'}) > \lambda
\}\right) \leq C\frac{\nu(\delta Q^-)}{\lambda^{p-1}}.
\]
Therefore, by Lemma~\ref{abstractlemma} we have
\begin{equation} \label{eq:uniformubound}
\left(\vint_{\delta Q^-}(u
  e^{-\beta-C'})^q\, d\nu\right)^{1/q} \leq C,
\end{equation}
where $C=C(C_\mu, C_p,\Lambda,p,q,\delta,T)$. Multiplying
\eqref{eq:uniformubound} with \eqref{supestimate} gives the result
\begin{align*}
  \left( \vint_{\delta Q^-} u^q\, d\mu dt\right)^{1/q} \leq C
  \essinf_{\delta Q^+} u
\end{align*}
for every $0<q<(p-1)(2-p/\kappa)$, where $C=C(C_\mu, C_p,\Lambda,p,q,\delta,T)$.
\end{proof}
\end{lemma}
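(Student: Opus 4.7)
The plan is to obtain the estimate by applying the Bombieri--Giusti lemma (Lemma~\ref{abstractlemma}) twice, once on each time half of the cylinder, and then gluing the two resulting inequalities. The key tool that connects the two sides is Lemma~\ref{measure estimate}: fixing a time level, say $t_0$ at the center of $Q_r$, produces a single constant $\beta$ (a weighted spatial mean of $\log u(\cdot,t_0)$) and a universal shift $C'$ so that on the forward cylinder $\log(u\,e^{-\beta-C'})$ has polynomially decaying upper level sets, while on the backward cylinder $\log(u^{-1}e^{\beta-C'})$ has polynomially decaying upper level sets. This common normalization by $e^\beta$ is what will permit the final multiplication.

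First I would work on the forward half. Applying Lemma~\ref{weakHsuper1} to the positive superminimizer $u$ on nested cylinders $\alpha Q^+ \supset \alpha' Q^+$ yields, for every $0<s\le p$, a reverse H\"older inequality of the shape
\[
\essinf_{\alpha' Q^+} u \;\ge\; \left(\frac{C}{(\alpha-\alpha')^\theta}\vint_{\alpha Q^+}u^{-s}\,d\nu\right)^{-1/s},
\]
which after multiplication by $e^{-\beta+C'}$ is exactly the first hypothesis of Lemma~\ref{abstractlemma} (with $q=\infty$) for the function $f=u^{-1}e^{\beta-C'}$. The second hypothesis is supplied by Lemma~\ref{measure estimate} applied on a slightly larger forward cylinder (say $Q_{(3+\delta)r/4}$ restricted to $t\ge t_0$), which contains $\tfrac{1+\delta}{2}Q^+$; the doubling property then controls $\nu(Q_{(3+\delta)r/4})$ by $\nu(\delta Q^+)$. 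Lemma~\ref{abstractlemma} therefore delivers
\[
\esssup_{\delta Q^+} u^{-1}e^{\beta-C'} \;\le\; C,
\qquad \text{i.e.,}\qquad e^{\beta} \;\le\; C\,e^{C'}\essinf_{\delta Q^+} u.
\]

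Next I would repeat the same scheme on the backward half, but with the roles reversed. Lemma~\ref{weakRHsuper} applied to $u\,e^{-\beta-C'}$ gives the reverse H\"older hypothesis of Lemma~\ref{abstractlemma} for every $0<s<q<(p-1)(2-p/\kappa)$, while Lemma~\ref{measure estimate} on a slightly larger backward cylinder yields the required logarithmic level-set bound for $\log(u\,e^{-\beta-C'})$ on $t\le t_0$. Lemma~\ref{abstractlemma} then produces
\[
\left(\vint_{\delta Q^-}(u\,e^{-\beta-C'})^{q}\,d\nu\right)^{1/q}\;\le\; C.
\]
Multiplying this bound by the estimate $e^{\beta}\le C e^{C'}\essinf_{\delta Q^+}u$ from the forward step cancels the normalization $e^\beta$ and yields the claimed inequality, with all constants depending only on $C_\mu, C_p, \Lambda, p, q, \delta, T$.

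The main obstacle is bookkeeping rather than analysis: one must (i) verify that a single choice of $\beta$ works for both sides simultaneously, (ii) ensure the enlarged cylinders on which Lemma~\ref{measure estimate} is applied are still contained in $Q_r$ and are comparable in measure to $\delta Q^\pm$ via doubling, and (iii) match the parameter $q$ in the Bombieri--Giusti hypothesis on the backward side with the admissible range $0<q<(p-1)(2-p/\kappa)$ coming from Lemma~\ref{weakRHsuper}. The intrinsic ``waiting time'' between $\delta Q^-$ and $\delta Q^+$ built into the definition of $Q^\pm$ is precisely what allows the same $\beta$ to control both sides, and is thus unavoidable.
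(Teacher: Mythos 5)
Your proposal follows the paper's proof step for step: Lemma~\ref{weakHsuper1} plus Lemma~\ref{measure estimate} plus Lemma~\ref{abstractlemma} (with $q=\infty$) on the forward cylinder to bound $\esssup_{\delta Q^+}u^{-1}e^{\beta-C'}$, then Lemma~\ref{weakRHsuper} plus Lemma~\ref{measure estimate} plus Lemma~\ref{abstractlemma} on the backward cylinder to bound $\left(\vint_{\delta Q^-}(ue^{-\beta-C'})^q\,d\nu\right)^{1/q}$, and finally multiplying so that $e^{\beta}$ cancels. One small slip in your opening paragraph: the forward level-set bound from Lemma~\ref{measure estimate} is for $\log(u^{-1}e^{\beta-C'})$ and the backward one is for $\log(u\,e^{-\beta-C'})$ (you stated them in the reverse order), but your actual development uses them the right way round, so the argument is correct.
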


We end this paper by completing the proof of Harnack's inequality for
parabolic minimizers. This is the first point at which we make use of
the fact that a minimizer is both a sub- and superminimizer.

\begin{theorem} \label{thm:Harnack} Suppose $1<p<\infty$ and assume
  that the measure $\mu$ in a geodesic metric space $X$ is doubling
  with doubling constant $C_\mu$, and the space supports a weak
  $(1,p)$-Poincar\'e inequality with constants $C_p$ and
  $\Lambda$. Then a parabolic Harnack inequality is valid as follows:
  Let $u>0$ be a parabolic minimizer in $Q_r\subset \Om_T$ which is
  locally bounded away from zero, and locally bounded. Let
  $0<\delta<1$. Then
\begin{equation*} \label{Harnack}
\esssup_{\delta Q^-}u \leq C\essinf_{\delta Q^+}u,
\end{equation*}
where $0<C<\infty$ and $C=C(C_\mu,C_p, \Lambda,p,\delta, T)$.

\begin{proof}
  By assumption, $u$ is both a parabolic sub- and
  superminimizer. Hence we may combine Lemma~\ref{weakRHsub} with
  Lemma~\ref{weakHsuper2} to obtain
\begin{align*}
  \esssup_{\delta Q^-}u &\leq \left(\frac{C}{((1+\delta)/2-\delta)^\theta}\right)^{1/(p-1)}\left(\vint_{(1+\delta)/2Q^-}u^{p-1}\, d\nu \right)^{1/(p-1)}\\
  &\leq C \essinf_{(1+\delta)/2 Q^+} u \leq C \essinf_{\delta
    Q^+} u,
\end{align*}
Where $\theta=\theta(C_\mu,p)$ and so $C=C(C_\mu, C_p,\Lambda, p,\delta,
T)$. 
\end{proof}

\end{theorem}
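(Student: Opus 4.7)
The plan is to combine the weak Harnack inequality for parabolic super-minimizers (Lemma~\ref{weakHsuper2}) with the reverse H\"older inequality for parabolic sub-minimizers (Lemma~\ref{weakRHsub}), exploiting that a parabolic minimizer is simultaneously a sub- and a super-minimizer. The super-side gives an \emph{upper} bound for an $L^q$ mean of $u$ on a negative cylinder in terms of $\essinf u$ on the corresponding positive cylinder, while the sub-side gives an upper bound for $\esssup u$ on a slightly smaller negative cylinder in terms of an $L^q$ mean on the larger one. Chaining the two estimates on suitably nested cylinders is enough.

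Concretely, I would introduce the intermediate scale $\widetilde\delta=(1+\delta)/2$, which satisfies $\delta<\widetilde\delta<1$ and therefore leaves a positive buffer $\widetilde\delta-\delta=(1-\delta)/2$ between the inner and outer cylinders. First I would apply the sub-minimizer estimate (Lemma~\ref{weakRHsub}, adapted to the negative cylinders $\alpha Q^-$ in the obvious way) with exponent $q=p-1\le p$ and parameters $\alpha'=\delta$, $\alpha=\widetilde\delta$, obtaining
\begin{equation*}
\esssup_{\delta Q^-} u \;\le\; \left(\frac{C}{(\widetilde\delta-\delta)^{\theta}}\right)^{1/(p-1)}\!\left(\vint_{\widetilde\delta Q^-} u^{p-1}\, d\nu\right)^{1/(p-1)}.
\end{equation*}
Next I would apply the super-minimizer estimate (Lemma~\ref{weakHsuper2}), with $\widetilde\delta$ in place of $\delta$ and with $q=p-1$. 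The exponent is admissible because $p-1<(p-1)(2-p/\kappa)$ (since $\kappa>p$ when $p<q_\mu$ and $\kappa=\infty$ otherwise). This yields
\begin{equation*}
\left(\vint_{\widetilde\delta Q^-} u^{p-1}\, d\nu\right)^{1/(p-1)}\;\le\; C\,\essinf_{\widetilde\delta Q^+} u.
\end{equation*}
Finally, since $\delta Q^+\subset \widetilde\delta Q^+$ I would use the trivial monotonicity $\essinf_{\widetilde\delta Q^+}u\le \essinf_{\delta Q^+} u$. Composing the three inequalities gives the Harnack bound with a constant of the form $C=C(C_\mu,C_p,\Lambda,p,\delta,T)$, as claimed.

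The only nontrivial check is that Lemma~\ref{weakRHsub}, whose statement is phrased for the symmetric cylinders $Q_{\alpha r}$, can be transported to the negative cylinders $\alpha Q^-$. This is not a genuine difficulty: the underlying Moser iteration only uses sub-minimizer energy estimates on a nested family of space-time cylinders with a Lipschitz cut-off, and the family $\{\alpha Q^-\}_{\delta\le\alpha\le 1}$ has exactly the same geometry with respect to scaling and cut-off construction as the family $\{Q_{\alpha r}\}$, so the iteration carries over verbatim. Thus the main obstacle reduces to bookkeeping: verifying that the exponent $p-1$ falls in the admissible ranges of both lemmas and that the two nested cylinders share the intermediate scale $\widetilde\delta$. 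Once those are in place, the proof is a one-line chain.
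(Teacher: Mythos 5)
Your proposal is correct and matches the paper's own proof essentially verbatim: both chain Lemma~\ref{weakRHsub} (with $q=p-1$, from $\delta Q^-$ to $(1+\delta)/2\, Q^-$) into Lemma~\ref{weakHsuper2} (at scale $(1+\delta)/2$) and finish with $\essinf_{(1+\delta)/2\,Q^+}u\le\essinf_{\delta Q^+}u$. The only difference is that you explicitly flag and dispose of the minor bookkeeping point that Lemma~\ref{weakRHsub} is stated for the symmetric cylinders $Q_{\alpha r}$ but is used on the one-sided family $\alpha Q^-$; the paper passes over this silently, so your remark is a welcome clarification rather than a deviation.
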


\end{document}